\documentclass[11pt,a4paper]{article}

\usepackage{comment}
\usepackage{graphicx}
\usepackage{amsmath}
\usepackage{amsfonts}
\usepackage{amssymb}
\usepackage{enumerate}

\usepackage{float}
\newfloat{Test}{htbp}{lot}
\floatname{Test}{Test}

\usepackage{lscape}
\usepackage{longtable}
\usepackage{rotating}
\usepackage{color}

\usepackage{tabularx}
\usepackage{array}
\newcolumntype{Y}{>{\centering\arraybackslash}X}

\usepackage{url}
\usepackage{rotating}
\usepackage{algpseudocode}
\usepackage[ruled]{algorithm}
\usepackage{algcompatible}
\usepackage{pgf,tikz}
\usepackage{subfig}
\usepackage[titletoc]{appendix}
\usepackage{wrapfig}
\usepackage{adjustbox}
\usepackage{mathrsfs}
\usepackage{pgfplots}
\usetikzlibrary{arrows}
\usepackage{color}

\usepackage{bbm}

\usepackage{booktabs}
\usepackage{multirow}
\usepackage{mathtools}
\usepackage{makecell}

\usepackage[sort,numbers]{natbib}

\usepackage{eqparbox}%

\numberwithin{table}{section}
\numberwithin{figure}{section}
\numberwithin{equation}{section}%

\usepackage[symbol]{footmisc}%

\newtheorem{theorem}{Theorem}[section]

\newtheorem{lemma}[theorem]{Lemma}

\newtheorem{assumption}{Assumption}[section]

\newenvironment{proof}[1][Proof]{\noindent \textbf{#1.} }{\hfill$\Box$\par\medskip}

\newcommand{\beqn}[1]{\begin{equation}\label{#1}}
\newcommand{\eeqn}{\end{equation}}

\newcommand{\Lf}{L}

\definecolor{darkgreen}{rgb}{0,0.6,0}
\definecolor{aau2}{rgb}{0.0, 0.5, 0.69}
\definecolor{aau3}{rgb}{0.0, 0.53, 0.74}
\definecolor{aau4}{rgb}{0.0, 0.48, 0.65}
\definecolor{aau5}{rgb}{0.0, 0.45, 0.73}
\definecolor{rsap}{RGB}{130, 36, 51}
\definecolor{gsap}{RGB}{112, 164, 137}

\definecolor{tud}{rgb}{0.43,0.73,0.11}
\definecolor{verde}{rgb}{0.33,0.53,0.11}

\definecolor{ttffqq}{rgb}{0.0, 0.48, 0.65} 
\definecolor{ffqqqq}{rgb}{0.0, 0.5, 0.69} 

\usetikzlibrary{arrows}

\tikzstyle{decision} = [diamond, draw, fill=blue!20,
text width=4.5em, text badly centered, node distance=3cm, inner sep=0pt]
\tikzstyle{block} = [rectangle, draw, fill=blue!20,
text centered, rounded corners, minimum height=4em]
\tikzstyle{line} = [draw, -latex']
\tikzstyle{cloud} = [draw, ellipse,fill=red!20, node distance=3cm,
minimum height=2em]
\tikzstyle{cloud2} = [draw, ellipse,fill=green!20, node distance=3cm,
minimum height=2em]

\pgfplotsset{compat=1.18}
\begin{document}
	
\title{Non-monotone direct-search methods for deterministic and stochastic derivative-free optimization}

	\author{
		A. Ding\thanks{Department of Industrial and Systems Engineering, Lehigh University, Bethlehem, PA 18015-1582, USA ({\tt and523@lehigh.edu}).}
		\and
        Trang H. Tran \thanks{Department of Mathematics, Emory University, Atlanta, GA 30322-1005, USA ({\tt htran31@emory.edu}). This work was done when the author was a postdoctoral researcher at Lehigh University.}
	\and
	L. N. Vicente\thanks{Department of Industrial and Systems Engineering, Lehigh University, Bethlehem, PA 18015-1582, USA ({\tt lnv@lehigh.edu}).}
	}
	
	\maketitle

\begin{abstract}

In derivative-free optimization (DFO), one minimizes functions for which the gradient is unavailable or expensive to compute. In many applications, objective function values and gradients are noisy due to simulations or system randomness. A class of standard direct-search methods for DFO accept a trial point when it decreases the objective function by an amount proportional to the squared stepsize. However, when applied to complex landscapes, such a requirement may trap the algorithm in a neighborhood of sub-optimal solutions. We study a non-monotone direct-search alternative where the trial function value is compared with the largest objective function obtained through the $M$ most recent distinct iterates. This max-$M$ non-monotone condition permits temporary increases in the objective function and can help navigate narrow curved valleys; however, its theoretical analysis is significantly more challenging due to the lack of monotonic decrease. 
In this paper, we develop a comprehensive complexity theory for the max-$M$ non-monotone direct-search in both deterministic and stochastic DFO problems. 
For deterministic objectives, we establish a worst-case iteration bound for a complete poll based on a positive spanning set and an expected iteration bound for a probabilistic-descent poll. We then analyze a stochastic variant using independent function estimates and show the expected iteration complexity under tail-bound assumptions of the stochastic errors. 
All three results have the standard complexity of $\mathcal{O}(\epsilon^{-2})$, which matches the iteration complexity of monotone direct-search methods. Our theory is enabled by a new family of merit functions that correct the stored objective values by ordered multiples of the squared stepsize, together with a renewal-reward stopping-time argument for the probabilistic methods. Numerical experiments on CUTEst problems indicate that when comparing with monotone direct search, the max-$M$ non-monotone method is particularly helpful on problems exhibiting negative curvature. 
\end{abstract}


\section{Introduction}

In this paper, we consider unconstrained optimization problems of the form
\begin{align*}
\min_{x\in\mathbb{R}^n}f(x),
\end{align*}
whose derivatives are assumed to be unavailable and costly to approximate. Besides, the objective function $f$ is assumed to be $L$-smooth and bounded below by $f^*$. We will first consider the case in which optimization algorithms have access to a zeroth-order oracle $f(x)$. Then, we will consider the case in which optimization algorithms have access to a stochastic zeroth-order oracle $F(x,\xi)$, which provides noisy observations of its deterministic counterpart $f(x)$ (and where~$\xi$ denotes a random estimator). The absence of derivative information is a common feature in many simulation-based optimization problems and constitutes the primary focus of derivative-free optimization (DFO). In this setting, access to the objective function $f$ is limited to evaluations via a zeroth-order oracle, which are often computationally expensive. Consequently, the central objective of DFO is to obtain high-quality solutions with few such oracle queries.

Trust-region, direct-search, and line-search methods constitute three major classes of algorithms in DFO. Trust-region methods construct local surrogate models of $f$, which are then used to generate trial steps and determine their acceptance. In contrast, direct-search methods probe the search space using a set of directions or polling points without relying on explicit models. Bridging these two paradigms, line-search methods approximate descent directions, typically via finite differences, and perform searches along these directions. For a comprehensive overview of these classes of DFO methods, we refer the reader to, e.g.,~\cite{conn2009introduction, larson2019derivative,audet2017derivative}. 
A representative 
example within direct-search methods is the probabilistic descent approach proposed in~\cite{gratton2015direct}. Unlike other classical direct-search methods that rely on a positive spanning set (see, e.g.,~\cite{conn2009introduction}) 
to guarantee a descent direction (requiring at least $n+1$ function evaluations), this method incorporates randomness to obtain a descent direction with a sufficiently large probability using only one or two function evaluations. 
It is shown in~\cite{gratton2015direct} that, for deterministic objective functions, the method achieves a zeroth-order oracle complexity of $\mathcal{O}(n/\epsilon^2)$ to find an $\epsilon$-approximate first-order stationary point $x$ defined by $\|\nabla f(x)\|\leq\epsilon$.

It is recognized in~\cite{grippo1986nonmonotone,chamberlain2009watchdog,dennis1996numerical} that enforcing the monotonicity of the function values may considerably slow the convergence of optimization algorithms, 
particularly in the presence of narrow curved valleys. To alleviate this phenomenon, a non-monotone technique was proposed in~\cite{grippo1986nonmonotone} for line-search algorithms, with reported favorable numerical results. Roughly speaking, such a non-monotone line search relaxes the Armijo condition (see, e.g.,~\cite{nocedal2006numerical}) 
and allows some non-monotonicity by using the maximum of objective function values at $M+1$ past and current iterate points, as follows
\begin{align*}
f(x_{k}+\alpha_{k}d_{k})\leq\max_{0\leq j\leq M}f(x_{k-j})+\frac{1}{2}\alpha_{k}\nabla f(x_{k})^\top d_{k},
\end{align*}
where $d_{k}$ is a descent direction, $\nabla f(x_{k})^\top d_{k}<0$, and $\alpha_{k}>0$ is a stepsize parameter. From this design, a non-monotone line search~\cite{zhang2004nonmonotone} may avoid creeping along the bottom of a narrow curved valley and can generally obtain better numerical results.

\subsection{Literature review}
We first review some theoretical results that have been obtained for non-monotone line search. In the original paper~\cite{grippo1986nonmonotone}, non-monotone line search was proven to converge globally, meaning that it converges for an arbitrary starting iterate point $x_{0}$, for smooth non-convex functions. 
Later, in~\cite{dai2002nonmonotone}, 
it was shown to exhibit a linear rate for uniformly convex functions. One variant using a moving average of past objective function values instead of their maximum was proposed and investigated in~\cite{zhang2004nonmonotone}, where it was shown to converge globally for smooth non-convex functions and to exhibit linear rates for strongly convex functions. 
For smooth non-convex functions, non-monotone line search was proven in~\cite{cartis2015worst} to find an $\epsilon$-approximate first-order stationary point in at most $\mathcal{O}(\epsilon^{-2})$ function and gradient evaluations.

Another interesting application setting for non-monotone algorithms is when deterministic noise 
occurs in the objective function. The deterministic noise in the objective function may destroy the benign geometry of the objective function 
and cause many spurious local minima, even when the algorithm is still far from stationarity. To handle this problem, especially when noise is bounded, the authors in~\cite{berahas2019derivative} add a constant non-monotonicity term $\epsilon_{f}>0$ to the Armijo condition as follows:
\begin{align*}
f(x_{k}+\alpha_{k}d_{k})\leq f(x_{k})+\frac{1}{2}\alpha_{k}\nabla f(x_{k})^\top d_{k}+\epsilon_{f}.
\end{align*}
Similarly, to handle bounded deterministic noises, the authors in~\cite{cao2024first} consider the following acceptance condition in trust-region methods
\begin{align*}
\frac{f(x_{k})-f(x_{k}+s_{k})+r}{m_{k}(x_{k})-m_{k}(x_{k}+s_{k})}\geq\eta_{1},
\end{align*}
where $\eta_{1}>0$ is a parameter, $s_{k}$ is a candidate point, $m_{k}$ is a local model, and $r>0$ is also a constant non-monotonicity parameter term. 
Different variants and analyzes following this lead can be found, for example, in~\cite{berahas2021global,larson2025novel}.

Now let us also review what has been proposed so far in non-monotone (derterministic) methods for DFO. The first non-monotone DFO algorithm known to us is a line-search algorithm that was proposed and shown in~\cite{diniz2008derivative} to converge globally. At the beginning of one iteration in their linesearch algorithm, a set of search directions $D_{k}$ is computed, and a stepsize $\alpha$ is set to $1$. Then the algorithm repeatedly shrinks the stepsize until the following sufficient decrease condition is satisfied for one of the directions $d$ in $D_{k}$
\begin{align*}
f(x_{k}+\alpha d)\leq\max_{0\leq j\leq M}f(x_{k-j})+\eta_{k}-\alpha^2\beta_{k},
\end{align*}
where $\{\eta_{k}>0\}$ satisfies $\sum_{k=0}^{\infty}\eta_{k}<\infty$
and $\{\beta_{k}>0\}$ is selected in a way that for all infinite subsets of indices $K\subset\mathbb{N}$, one has $\lim_{k\in K}\beta_{k}=0\Rightarrow\lim_{k\in K}\nabla f(x_{k})=0$. Therefore, at the end of each iteration, under appropriate assumptions, a distinct $x_{k+1}$ is always found. Direct search and line search are very similar algorithms, but the stepsize in direct search is maintained adaptively across different iterations. This distinction then requires a slightly different analysis for these two algorithms.
Other non-monotone derivative-free algorithms include a parallel and sequential algorithm~\cite{garcia2013sequential} using a space decomposition scheme for box constrained optimization problems, an inexact line-search algorithm~\cite{grippo2015class} constructing search directions by combining
coordinate rotations with simplex gradients, and a direct-search algorithm~\cite{garcia2020non} for linearly constrained minimization problems.
Although these derivative-free non-monotone methods have generally been shown to converge globally, their iteration complexity results have yet to be established.
While finishing this paper we became aware of a new paper~\cite{NIMA} establishing a worst-case complexity result for non-monotone direct search, where $f(x_k)$ in the sufficient decrease condition is replaced by $f(x_k) + \eta_k$, with $\eta_k \ge 0$ chosen to satisfy $\sum_{k=0}^{\infty}\eta_{k}<\infty$. This condition is different from the max-$M$ acceptance rule that it requires a user-specified sequence~$\eta_k$ to control the non-monotonicity at each iteration $k$. In practice, parameter tuning is often needed to find the best values for $\eta_k$ as these values usually depends on the scale of the objective function and the problem. 

\subsection{Our contributions}



In this paper, we develop and analyze max-$M$ non-monotone direct-search methods for deterministic and stochastic DFO. We replace the usual sufficient decrease condition
\begin{align*}
f(x_{k}+\delta_{k}d_k)\leq f(x_{k})-c\delta_k^2,
\end{align*}
with the following acceptance rule 
\begin{align*}
f(x_{k}+\delta_{k}d_{k})\leq\max\left(f(x_{succ,k}^{M-1}),\ldots,f(x_{succ,k}^{1}),f(x_{k})\right)-c\delta_k^2,
\end{align*}
where $c>0$, $\delta_k>0$ is the adaptive stepsize, $d_k$ is a search direction, 
$x_{succ,k}^{i}$ is the $i$-th latest successful iterate from $x_{k}$, and $M > 1$ is a chosen memory length. 
The method may therefore accept a trial point whose objective value exceeds $f(x_k)$, but it still requires a decrease of order $\delta_k^2$ compared to the largest function value in a window of $M$ past iterates.

For deterministic objectives, we show  theoretical analysis for direct-search methods with this acceptance rule: one based on a positive spanning set and one based on probabilistic descent directions. For the former, we establish a worst-case bound on the number of iterations required to produce an iterate $x_k$ satisfying $\|\nabla f(x_k)\|\leq\epsilon$, while for the latter, we establish an expected iteration bound. Both analysis recover the standard $\mathcal{O}(\epsilon^{-2})$ dependence on the stationary tolerance~$\epsilon$ for monotone direct-search methods.
In addition, we propose a stochastic max-$M$ direct-search algorithm where acceptance test uses independent stochastic estimates of the objective values at $x_k + \delta_k d_k$, $x_k$, and $x_{succ,k}^{i}$ for $i= 1, \dots, M-1$. Under conditional probabilistic assumptions on the sampling error and on a probabilistic descent direction, we show an $\mathcal{O}(\epsilon^{-2})$ bound on the expected number of iterations for the proposed stochastic method. 
Finally, we show a competitive, sometimes better performance of deterministic and stochastic max-$M$ algorithms compared to their monotone counterparts on two sets of CUTEst problems. 

To the best of our knowledge, this is the first work that analyzes the non-monotone method in stochastic DFO, and also the first work that analyzes the max-$M$ acceptance rule for direct search.
The main technical challenge in our analysis is the construction of a merit function adapted to the max-$M$ non-monotone decrease condition. The most natural, naive merit function consists of the maximum objective value among $M$ prior iterates and a multiple of the squared stepsize, however, it may not decrease after a successful iteration because the stepsize is increased. To overcome this difficulty, we subtract ordered terms of the squared stepsize from the objective values before taking their maximum. These corrections are carefully chosen to account both for the shift of merit function per iteration and for the change in the stepsize. Most importantly, this merit function allows for a decrease which only depends on the acceptance condition at the current iteration. This local dependence makes such merit function feasible for the analysis of the subsequent stochastic algorithm. 

\section{Merit function design for max-$M$ non-monotone direct search}\label{sec:merit_functions}

Our convergence theory is based on the development of an appropriate merit (Lyapunov) function, and we would like to discuss the main properties of such functions that guided our efforts. Merit functions are commonly seen in constrained optimization~\cite{nocedal2006numerical} and are used in the analyses of many algorithms for continuous optimization of non-linear functions. They are designed to evaluate the progress of an iterative algorithm. They are usually bounded from below and expected to decrease by a non-negligible amount at each iteration of an algorithm before a stationary point is found. Such properties of merit functions allow us to use them for the derivation of asymptotic global convergence or iteration complexity bounds. As optimization algorithms become more advanced and complicated, merit functions provide an intuitive and simplified approach to analyze these methods (e.g., \cite{blanchet2019convergence} provides an framework for analyzing stochastic objective functions).


For monotone direct search, the objective value itself is a natural progress measure because every successful iteration satisfies
\begin{align*}
f(x_{k+1})\leq f(x_k)-c\delta_k^2.
\end{align*}
This argument is unavailable for max-$M$ direct search. The accepted point is compared with the largest objective value among the current and $M-1$ most recent distinct iterates, and hence $f(x_{k+1})$ may exceed $f(x_k)$. 

The most immediate and natural candidate for a merit function is
\begin{align*}
\widehat\Phi_k=\max\left(f(x_{succ,k}^{M-1}),\ldots,f(x_{succ,k}^{1}),f(x_k)\right)-f^*+\eta\delta_k^2,
\end{align*}
where $\eta>0$ is a constant parameter. This candidate is bounded below, but it does not necessarily decrease at each iteration.
In fact, suppose that iteration $k$ is successful and that $$\max\left(f(x_{succ,k}^{M-2}),\ldots,f(x_{succ,k}^{1}),f(x_{k})\right)\geq f(x_{succ,k}^{M-1}),$$
then we have 
\begin{align*}
\widehat\Phi_{k+1}  =&\max{\left(f(x_{succ,k+1}^{M-1}),\ldots,f(x_{succ,k+1}^{1}),f(x_{k+1})\right)}+\eta\delta_{k+1}^2 - f^*\\
=&\max\left(f(x_{succ,k}^{M-2}),\ldots,f(x_{k}),f(x_{k+1})\right)+\eta\gamma^2\delta_{k}^2- f^*\\
\geq&\max\left(f(x_{succ,k}^{M-2}),\ldots,f(x_{k})\right)+\eta\gamma^2\delta_{k}^2- f^*\\
\geq&\max\left(f(x_{succ,k}^{M-1}),\ldots,f(x_{succ,k}^{1}),f(x_{k})\right)+\eta\gamma^2\delta_{k}^2- f^*.
\end{align*}
Since $\gamma>1$, $\widehat\Phi_k$ increases by at least $\eta(\gamma^2-1)\delta_{k}^2$ in this case. Therefore, $\widehat\Phi_k$ does not offer a decrease, and we need a new merit function design for the max-$M$ non-monotone condition. 

Let $0<c_1<\cdots<c_{M-1}<\eta$ be an increasing sequence of constants. 
Our main design assigns different stepsize correction $c_{i}\delta_k^2$ to each prior objective value $f(x_{succ,k}^{i})$ as follows:
\begin{align}\label{meritd}
\Phi_k={}&\max\left(f(x_{succ,k}^{M-1})-c_{M-1}\delta_k^2,\ldots,
f(x_{succ,k}^{1})-c_1\delta_k^2,f(x_k)\right) -f^*+\eta\delta_k^2,
\end{align}
This merit function is non-negative, and the oldest stored value receives the largest correction. We note that $\Phi_{k}$ reduces to $\widehat\Phi_k$ when $c_{1}=\cdots=c_{M-1}=0$.
On a successful iteration, the memory window shifts, the current iterate becomes a past iterate, and the stepsize is enlarged. The ordered corrections $c_i > 0$ are carefully chosen so that the quantity $c_i\gamma^2-c_{i-1}$ is positive and the change in the merit function generates a decrease. On an unsuccessful iteration, the past points remain unchanged and the stepsize contracts; hence choosing $\eta$ sufficiently large relative to $c_{M-1}$ makes the decrease in $\eta\delta_k^2$ dominate the change in the corrected maximum. Balancing these two cases yields constants for which
\begin{align*}
\Phi_k-\Phi_{k+1}\geq \nu\delta_k^2,
\end{align*}
for some constant $\nu>0$.

For the direct-search method based on a positive spanning set, we utilize a simpler merit function, as this algorithm allows for a lower bound on the stepsize. As a result, the dynamic stepsize $\delta_k$ was replaced by a constant threshold $\delta_\epsilon$. In contrast, for the latter cases where the search direction is random and the function potentially contains noise, the stepsize does not admit a lower bound, which makes the analyis of the merit function much more challenging. For stochastic DFO, our analysis obtains a bound on the expected decrease of the merit function, instead of a deterministic bound.

\section{Non-monotone direct search for deterministic functions}
\subsection{Max-$M$ non-monotone direct search with positive spanning set}
The first method in our consideration (presented in Algorithm~\ref{algo1pss}) replaces the sufficient decrease condition in standard direct search with positive spanning set~\cite{conn2009introduction} by the max-$M$ non-monotone condition. Without loss of generality, let $\mathcal{D}$ be a positive spanning set with cosine measure $1/\sqrt{n}$, e.g., $\mathcal{D}=\{\pm e_{1},\ldots,\pm e_{n}\}$. The algorithm searches for a direction $d_k \in \mathcal{D}$ such that $f(x_k + \delta_k d_k)$ satisfies the decrease condition~\eqref{sdcdpss}.
If the algorithm finds a successful direction, the polling point becomes the next iterate and the stepsize is enlarged by $\gamma$; if no direction in $\mathcal D$ is successful, the iterate is retained and the stepsize is reduced by $\theta$.


\begin{algorithm}[H]
\caption{Max-$M$ Non-monotone Direct Search Based on Positive Spanning Set}\label{algo1pss}
\begin{algorithmic}[1]
\STATE{Initialization. Choose integer $M>1$, $c>0$, $x_0$, $\delta_0$, $\theta\in(0,1)$, $\gamma\in(1,\infty)$.}
\FOR{$k=0,1,\ldots$}
\STATE{For $i\in\mathbb{N}^{+}$, if there is an $i$-th latest successful iterate point from $x_{k}$, then record it with~$x_{succ,k}^{i}$. Otherwise let $x_{succ,k}^{i}$ be $x_{0}$.}
\IF{there exists $d_{k}\in\mathcal{D}$ such that
\begin{align}\label{sdcdpss}
\max\left(f(x_{succ,k}^{M-1}),\ldots,f(x_{succ,k}^{1}),f(x_{k})\right)-f(x_{k}+\delta_{k}d_k)\geq c\delta_k^2,
\end{align}
}
\STATE{Set $x_{k+1}=x_k+\delta_k d_k$ and $\delta_{k+1}=\gamma\delta_{k}$. This iteration is successful.}
\ELSE
\STATE{Set $x_{k+1}=x_k$ and $\delta_{k+1}=\theta\delta_{k}$. This iteration is not successful.}
\ENDIF
\ENDFOR
\end{algorithmic}
\end{algorithm}

The goal of our analysis is to bound the number of iterations $T_\epsilon$ defined as follows
\begin{align*}
T_\epsilon=\inf\{k\geq0:\|\nabla f(x_k)\|\leq\epsilon\}.
\end{align*}
We first assume that the objective function has a lower bound and its gradient is Lipschitz smooth, which are standard assumptions in first-order complexity analyses of smooth, non-convex direct-search methods.
\begin{assumption}\label{a21}
The objective function $f(x)$ is bounded below by $f^*$.
\end{assumption}

\begin{assumption}\label{a22}
The gradient of the objective function $f(x)$ is Lipschitz continuous (with constant $\Lf > 0$).
\end{assumption}

As the objective is Lipschitz smooth, we show (using well known arguments) that before the gradient norm approaches $\epsilon$, there exists a threshold $\delta_{\epsilon}$ that any stepsize below $\delta_{\epsilon}$ produces a successful iteration.


\begin{lemma}\label{lemma2.1}
Let Assumption~\ref{a22} hold and let $\delta_{\epsilon}=\frac{2\epsilon}{\sqrt{n}(\Lf+2c)}$. For Algorithm~\ref{algo1pss}, if $\delta_{k}\leq\delta_{\epsilon}$ and $k<T_{\epsilon}$, then iteration $k$ is successful.
\end{lemma}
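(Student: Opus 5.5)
Because $k<T_\epsilon$, we have $\|\nabla f(x_k)\|>\epsilon$. The plan is to show that some direction of $\mathcal{D}$ already gives the \emph{monotone} sufficient decrease $f(x_k)-f(x_k+\delta_k d_k)\geq c\delta_k^2$. That is enough, since
\[
\max\left(f(x_{succ,k}^{M-1}),\ldots,f(x_{succ,k}^{1}),f(x_k)\right)\geq f(x_k),
\]
so any $d_k$ giving this decrease also satisfies the max-$M$ condition~\eqref{sdcdpss}. The non-monotone memory therefore plays no role here, and the proof reduces to the standard direct-search argument.

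First, use the cosine measure of $\mathcal{D}$ (at least $1/\sqrt{n}$) with the vector $-\nabla f(x_k)\neq 0$. This gives a $d\in\mathcal{D}$ with
\[
-\nabla f(x_k)^\top d\geq \frac{1}{\sqrt{n}}\|\nabla f(x_k)\|\,\|d\|.
\]
For $\mathcal{D}=\{\pm e_i\}$ we have $\|d\|=1$. In general I will assume normalized directions, $\|d\|=1$, which is implicit in the stated formula for $\delta_\epsilon$.

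Second, apply the descent lemma from Assumption~\ref{a22}:
\[
f(x_k+\delta_k d)\leq f(x_k)+\delta_k\nabla f(x_k)^\top d+\frac{\Lf}{2}\delta_k^2 .
\]
Combining this with the first step,
\[
f(x_k)-f(x_k+\delta_k d)\geq \frac{\delta_k\epsilon}{\sqrt{n}}-\frac{\Lf}{2}\delta_k^2 .
\]
It then suffices that $\frac{\epsilon}{\sqrt n}-\frac{\Lf}{2}\delta_k\geq c\delta_k$, which rearranges to $\delta_k\leq \frac{2\epsilon}{\sqrt n(\Lf+2c)}=\delta_\epsilon$. This holds by hypothesis. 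The gradient bound is strict, so the needed inequality holds with room to spare.

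There is no real obstacle in this argument. The only points needing care are the normalization of the directions in $\mathcal{D}$ and remarking that the max over the memory window can only help, so the deterministic monotone argument carries over verbatim.
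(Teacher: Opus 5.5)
Your proof is correct and follows essentially the same route as the paper. You pick a direction via the cosine measure and combine it with the Lipschitz-gradient (descent lemma) bound to get the monotone decrease $f(x_k)-f(x_k+\delta_k d_k)\geq c\delta_k^2$, which transfers to the max-$M$ condition because the maximum over the memory window is at least $f(x_k)$; your normalization $\|d\|=1$ is the same assumption the paper uses implicitly.
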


\begin{proof}
It is well known~(see, for example,~\cite[Lemma 1.2.3]{nesterov2013introductory}) that Assumption~\ref{a22} implies for any $x,y$ from $\mathbb{R}^n$ that
\begin{align*}
f(x)-f(y)\geq\nabla f(x)^\top(x-y)-\frac{\Lf}{2}\|x-y\|^2.
\end{align*}
After substituting $x=x_k$ and $y=x_k+\delta_kd_k$, since $\|d_{k}\|=1$, we obtain
\begin{align}\label{p1}
f(x_k)-f(x_k+\delta_kd_k)&\geq-\nabla f(x_k)^\top (\delta_kd_k)-\frac{\Lf}{2}\delta_k^2.
\end{align}
Since the cosine measure of $\mathcal{D}$ is $1/\sqrt{n}$, by definition of cosine measure, we have
\begin{align}\label{cm}
\max_{d_{k}\in\mathcal{D}}\frac{-\nabla f(x_{k})^\top d_{k}}{\|\nabla f(x_{k})\| \|d_{k}\|}\geq\frac{1}{\sqrt{n}}.
\end{align}
Note that $k<T_{\epsilon}$ implies $\|\nabla f(x_{k})\|>\epsilon$. Together with~(\ref{cm}) and $\|d_{k}\|=1$, there exists $d_{k}\in\mathcal{D}$ such that
\begin{align}\label{p2}
-\nabla f(x_k)^\top (\delta_kd_k)\geq\frac{1}{\sqrt{n}}\|\nabla f(x_{k})\|\|d_{k}\|\delta_{k}>\frac{\epsilon\delta_{k}}{\sqrt{n}}.
\end{align}
After combining~(\ref{p1}) and~(\ref{p2}), we have: when $\delta_{k}\leq\delta_{\epsilon}=\frac{2\epsilon}{\sqrt{n}(\Lf+2c)}$ and $k<T_{\epsilon}$, there exists $d_{k}\in\mathcal{D}$ such that
\begin{align*}
\max\left(f(x_{succ,k}^{M-1}),\ldots,f(x_{succ,k}^{1}),f(x_{k})\right)-f(x_{k}+\delta_{k}d_k)&\geq f(x_k)-f(x_k+\delta_kd_k)\\
&\geq\frac{\epsilon\delta_{k}}{\sqrt{n}}-\frac{\Lf}{2}\delta_k^2\geq c\delta_{k}^2,
\end{align*}
which means iteration $k$ is successful.
\end{proof}

From the result of Lemma~\ref{lemma2.1}, we show that all the stepsizes in Algorithm~\ref{algo1pss} admit a lower bound as follows.
\begin{lemma}\label{lemma2.2}
Let Assumption~\ref{a22} hold. In Algorithm~\ref{algo1pss}, suppose that $\delta_{\epsilon}\leq\delta_{0}$ and $k\leq T_{\epsilon}$. We have
\begin{align*}
\delta_{k}\geq\theta\delta_{\epsilon}.
\end{align*}
\end{lemma}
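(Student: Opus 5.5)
We argue by induction on $k$, for $0\le k\le T_\epsilon$, using Lemma~\ref{lemma2.1} as the only ingredient. The base case $k=0$ is immediate: the hypothesis $\delta_\epsilon\le\delta_0$ together with $\theta<1$ gives $\delta_0\ge\delta_\epsilon>\theta\delta_\epsilon$.

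For the inductive step, assume $\delta_k\ge\theta\delta_\epsilon$ for some $k$ with $k+1\le T_\epsilon$, so that $k<T_\epsilon$ and Lemma~\ref{lemma2.1} applies at iteration $k$. We split into two cases according to the size of $\delta_k$.
\begin{itemize}
\item \emph{Case $\delta_k\le\delta_\epsilon$.} Lemma~\ref{lemma2.1} guarantees that iteration $k$ is successful. Hence $\delta_{k+1}=\gamma\delta_k>\delta_k\ge\theta\delta_\epsilon$, where we used $\gamma>1$ and the induction hypothesis.
\item \emph{Case $\delta_k>\delta_\epsilon$.} Whatever the outcome of iteration $k$, the stepsize update gives $\delta_{k+1}\ge\min(\gamma,\theta)\,\delta_k=\theta\delta_k>\theta\delta_\epsilon$.
\end{itemize}
In both cases $\delta_{k+1}\ge\theta\delta_\epsilon$, which closes the induction and covers every index up to and including $T_\epsilon$.

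An equivalent way to phrase the argument is by contradiction. Consider the first index $k\le T_\epsilon$ with $\delta_k<\theta\delta_\epsilon$. Then $k\ge1$, and iteration $k-1$ must be unsuccessful with $\delta_{k-1}=\delta_k/\theta<\delta_\epsilon$. Since $k-1<T_\epsilon$, Lemma~\ref{lemma2.1} says iteration $k-1$ is successful, a contradiction.

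There is no real obstacle. The only points needing care are that the induction step invokes Lemma~\ref{lemma2.1} at index $k<T_\epsilon$, which is what allows the conclusion to reach $k=T_\epsilon$, and that the hypothesis $\delta_\epsilon\le\delta_0$ is what secures the base case. The max-$M$ structure plays no role here beyond what is already used in Lemma~\ref{lemma2.1}.
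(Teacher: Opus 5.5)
Your proof is correct and follows essentially the same route as the paper: induction on $k$, with the inductive step split according to whether $\delta_k$ lies below $\delta_\epsilon$ (where Lemma~\ref{lemma2.1} forces a success, hence $\delta_{k+1}=\gamma\delta_k$) or above it (where even a contraction gives $\delta_{k+1}\geq\theta\delta_\epsilon$). The boundary case $\delta_k=\delta_\epsilon$ goes to a different branch than in the paper, and the contradiction rephrasing is a nice equivalent touch, but neither changes the argument.
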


\begin{proof}
We prove this by induction. For $k=0$, we have $\delta_{0}\geq\delta_{\epsilon}\geq\theta\delta_{\epsilon}$, and the conclusion holds. We assume for induction that $t\leq T_{\epsilon}$ and $\delta_{t}\geq\theta\delta_{\epsilon}$ for some arbitrary $t\geq 0$. Then it suffices to prove that $t+1\leq T_{\epsilon}$ implies $\delta_{t+1}\geq\theta\delta_{\epsilon}$. We prove this by cases. If $\delta_{t}\geq\delta_{\epsilon}$, then by the algorithm mechanism, we have $\delta_{t+1}\geq\theta\delta_{\epsilon}$. Otherwise, we have $\theta\delta_{\epsilon}\leq\delta_{t}<\delta_{\epsilon}$. Since $t+1\leq T_{\epsilon}$ implies $t<T_{\epsilon}$, using Lemma~\ref{lemma2.1} shows that iteration $t$ is successful. Therefore, the algorithm mechanism indicates that $\delta_{t+1}=\gamma\delta_{t}>\delta_{t}\geq\theta\delta_{\epsilon}$, which concludes the proof.
\end{proof}
We are interested in the number of successes before iteration $k$, denoted by $N_{succ,k}$. 
The stepsize mechanism of Algorithm~\ref{algo1pss} implies
\begin{align*}
\delta_{k}=\delta_{0}\gamma^{N_{succ,k}}\theta^{k-N_{succ,k}}.
\end{align*}
The following lemma shows a relation between $N_{succ,k}$ and the total number of iteration $k$.
\begin{lemma}\label{lemma2.3}
Let Assumption~\ref{a22} hold. In Algorithm~\ref{algo1pss}, suppose that $\delta_{\epsilon}\leq\delta_{0}$ and $k\leq T_{\epsilon}$. We have
\begin{align*}
k\leq\frac{\ln{\frac{\delta_{0}}{\theta\delta_{\epsilon}}}+N_{succ,k}\ln{\frac{\gamma}{\theta}}}{\ln{\frac{1}{\theta}}}.
\end{align*}
\end{lemma}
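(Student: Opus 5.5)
The plan is to combine the closed-form expression for the stepsize with the lower bound of Lemma~\ref{lemma2.2}. We start from the identity already noted before the statement,
\begin{align*}
\delta_{k}=\delta_{0}\gamma^{N_{succ,k}}\theta^{k-N_{succ,k}}.
\end{align*}
It holds because each of the $k$ iterations before iteration $k$ multiplies the stepsize either by $\gamma$ (exactly $N_{succ,k}$ times) or by $\theta$ (the remaining $k-N_{succ,k}$ times).

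Since $\delta_\epsilon\leq\delta_0$ and $k\leq T_\epsilon$, Lemma~\ref{lemma2.2} applies and gives $\delta_k\geq\theta\delta_\epsilon$. Substituting the closed form yields
\begin{align*}
\theta\delta_{\epsilon}\leq\delta_{0}\gamma^{N_{succ,k}}\theta^{k-N_{succ,k}}.
\end{align*}
All quantities are positive, so we may take natural logarithms:
\begin{align*}
\ln(\theta\delta_\epsilon)\leq\ln\delta_0+N_{succ,k}\ln\gamma+(k-N_{succ,k})\ln\theta .
\end{align*}

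Next we isolate $k$. Moving $k\ln\theta$ to the left gives $k\ln\frac{1}{\theta}$, and the remaining right-hand side becomes $\ln\frac{\delta_0}{\theta\delta_\epsilon}+N_{succ,k}\bigl(\ln\gamma-\ln\theta\bigr)=\ln\frac{\delta_0}{\theta\delta_\epsilon}+N_{succ,k}\ln\frac{\gamma}{\theta}$. Because $\theta\in(0,1)$, we have $\ln\frac{1}{\theta}>0$, so dividing by it preserves the inequality and gives exactly the claimed bound.

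There is no real obstacle here. The only points needing care are the sign of $\ln\theta$ when dividing, and confirming that the hypotheses of Lemma~\ref{lemma2.2} match those of the statement (they do: $\delta_\epsilon\leq\delta_0$ and $k\leq T_\epsilon$).
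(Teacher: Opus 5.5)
Your proposal is correct and follows the same route as the paper: apply Lemma~\ref{lemma2.2} to get $\delta_k\geq\theta\delta_\epsilon$, substitute $\delta_k=\delta_0\gamma^{N_{succ,k}}\theta^{k-N_{succ,k}}$, take logarithms, and rearrange using $\ln\frac{1}{\theta}>0$. You also spell out the logarithm algebra that the paper leaves implicit, and it is correct.
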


\begin{proof}
From Lemma~\ref{lemma2.2}, we have $\delta_{k}\geq\theta\delta_{\epsilon}$. The result is obtained after using the fact that  $\delta_{k}=\delta_{0}\gamma^{N_{succ,k}}\theta^{k-N_{succ,k}}$ and rearranging.
\end{proof}

Let the merit function for Algorithm~\ref{algo1pss} be defined as 
$$\phi_{k}=\max\left(f(x_{succ,k}^{M-1})-c_{M-1}\theta^2\delta_{\epsilon}^2,\ldots,f(x_{succ,k}^{1})-c_{1}\theta^2\delta_{\epsilon}^2,f(x_{k})\right)-f^{*},$$
where $0<c_1<\cdots<c_{M-1}<\eta$. 
Note that this function is slightly different from $\Phi_k$ in~\eqref{meritd} which uses the squared stepsize $\delta_k^2$. 
Now we are ready to present the decrease lemma for $\phi_{k}$ after a successful iteration.
\begin{lemma}\label{lemma2.4}
Let Assumption~\ref{a22} hold. Let $c_{i}=\frac{i}{M}c$, $1\leq i\leq M-1$. In Algorithm~\ref{algo1pss}, suppose that $\delta_{\epsilon}\leq\delta_{0}$ and $k\leq T_{\epsilon}$. From the satisfaction of the sufficient decrease condition~(\ref{sdcdpss}) at iteration $k$, we have
\begin{align*}
&\max\left(f(x_{succ,k}^{M-1})-c_{M-1}\theta^2\delta_{\epsilon}^2,\ldots,f(x_{succ,k}^{1})-c_{1}\theta^2\delta_{\epsilon}^2,f(x_{k})\right) \\
&-\max\left(f(x^{M-2}_{succ,k})-c_{M-1}\theta^2\delta_{\epsilon}^2,\ldots,f(x_{succ,k}^{1})-c_{2}\theta^2\delta_{\epsilon}^2,f(x_{k})-c_{1}\theta^2\delta_{\epsilon}^2,f(x_{k}+\delta_{k}d_{k})\right)\\
&\geq \frac{c}{M}\theta^2\delta_{\epsilon}^2.
\end{align*}
\end{lemma}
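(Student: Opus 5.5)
Let $A$ denote the first maximum and $B$ the second. Write $\tau=\theta^2\delta_\epsilon^2$ and $m=\max\left(f(x_{succ,k}^{M-1}),\ldots,f(x_{succ,k}^{1}),f(x_{k})\right)$. The plan is to bound each of the $M$ entries of $B$ by $A-\frac{c}{M}\tau$ and then take the maximum. The entries of $B$ split into two groups. The first group consists of the shifted stored values $f(x^{i-1}_{succ,k})-c_i\tau$ for $2\le i\le M-1$, together with $f(x_k)-c_1\tau$; we read $x^{0}_{succ,k}=x_k$ and $c_0=0$. The second group is the single trial value $f(x_k+\delta_k d_k)$.

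For the first group, take a generic entry $f(x^{i-1}_{succ,k})-c_i\tau$ with $1\le i\le M-1$. The matching entry of $A$ is $f(x^{i-1}_{succ,k})-c_{i-1}\tau$ (for $i=1$ it is $f(x_k)$ itself), and it is at most $A$. Hence
\begin{align*}
f(x^{i-1}_{succ,k})-c_i\tau\le A-(c_i-c_{i-1})\tau = A-\frac{c}{M}\tau,
\end{align*}
using $c_i-c_{i-1}=c/M$. The choice $c_i=\frac{i}{M}c$ gives exactly this uniform gap. Note that the stored value $f(x^{M-1}_{succ,k})$ simply drops out of $B$, which only helps.

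For the trial point, the sufficient decrease condition~\eqref{sdcdpss} gives $f(x_k+\delta_kd_k)\le m-c\delta_k^2$. By Lemma~\ref{lemma2.2}, the hypotheses $\delta_\epsilon\le\delta_0$ and $k\le T_\epsilon$ give $\delta_k\ge\theta\delta_\epsilon$, so $c\delta_k^2\ge c\tau$. It remains to compare $m$ with $A$. Each entry of $A$ is an entry of $m$ lowered by at most $c_{M-1}\tau$, so $A\ge m-c_{M-1}\tau=m-\frac{M-1}{M}c\tau$. Therefore
\begin{align*}
f(x_k+\delta_kd_k)\le m-c\tau\le A+\frac{M-1}{M}c\tau-c\tau = A-\frac{c}{M}\tau.
\end{align*}

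Taking the maximum over all entries yields $B\le A-\frac{c}{M}\theta^2\delta_\epsilon^2$, which is the claim. The only delicate step is the trial-point bound. The max-$M$ condition compares against the uncorrected maximum $m$, whereas the merit function uses corrected values, so the correction $c_{M-1}\tau$ must be paid for out of the $c\delta_k^2$ decrease. This is exactly why we need $c_{M-1}<c$ and the stepsize lower bound of Lemma~\ref{lemma2.2}. The remaining steps are routine index bookkeeping for the shift of the memory window.
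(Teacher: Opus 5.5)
Your proof is correct and follows essentially the same route as the paper. The paper likewise uses the monotonicity of the $c_i$ to get $A \geq m - c_{M-1}\theta^2\delta_\epsilon^2$, and combines this with~\eqref{sdcdpss} and Lemma~\ref{lemma2.2} to handle the trial point. It then writes $A-B$ as the minimum of $A$ minus each entry of $B$ and bounds the shifted entries via the gaps $c_i-c_{i-1}=c/M$ and $c_1=c/M$, so your entrywise bound $B \leq A-\frac{c}{M}\theta^2\delta_\epsilon^2$ is the same argument without the $\min$ rewriting.
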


\begin{proof}
For ease of notation, we denote that 
$$B_{k}=\max\left(f(x_{succ,k}^{M-1})-c_{M-1}\theta^2\delta_{\epsilon}^2,\ldots,f(x_{succ,k}^{1})-c_{1}\theta^2\delta_{\epsilon}^2,f(x_{k})\right).$$ 
Since $c_{i}$, $1\leq i\leq M-1$, are monotonically increasing, one has
\begin{align*}
B_{k}&=\max\left(f(x_{succ,k}^{M-1})-c_{M-1}\theta^2\delta_{\epsilon}^2,\ldots,f(x_{succ,k}^{1})-c_{1}\theta^2\delta_{\epsilon}^2,f(x_{k})\right)\\
&\geq\max\left(f(x_{succ,k}^{M-1})-c_{M-1}\theta^2\delta_{\epsilon}^2,\ldots,f(x_{succ,k}^{1})-c_{M-1}\theta^2\delta_{\epsilon}^2,f(x_{k})-c_{M-1}\theta^2\delta_{\epsilon}^2\right)\\
&=\max\left(f(x_{succ,k}^{M-1}),\ldots,f(x_{succ,k}^{1}),f(x_{k})\right)-c_{M-1}\theta^2\delta_{\epsilon}^2,
\end{align*}
which, together with~(\ref{sdcdpss}) and Lemma~\ref{lemma2.2}, implies
\begin{align}
B_{k}-f(x_{k}+\delta_{k}d_{k})&\geq\max\left(f(x_{succ,k}^{M-1}),\ldots,f(x_{succ,k}^{1}),f(x_{k})\right)-c_{M-1}\theta^2\delta_{\epsilon}^2-f(x_{k}+\delta_{k}d_{k})\notag\\
&\geq c\delta_{k}^2-c_{M-1}\theta^2\delta_{\epsilon}^2\notag\\
&\geq\frac{c}{M}\theta^2\delta_{\epsilon}^2.\label{22}
\end{align}
Note that
\begin{align*}
&B_{k}-\max\left(f(x^{M-2}_{succ,k})-c_{M-1}\theta^2\delta_{\epsilon}^2,\ldots,f(x_{succ,k}^{1})-c_{2}\theta^2\delta_{\epsilon}^2,f(x_{k})-c_{1}\theta^2\delta_{\epsilon}^2,f(x_{k}+\delta_{k}d_{k})\right)\\
&=B_{k}+\min\left(c_{M-1}\theta^2\delta_{\epsilon}^2-f(x^{M-2}_{succ,k}),\ldots,c_{1}\theta^2\delta_{\epsilon}^2-f(x_{k}),-f(x_{k}+\delta_{k}d_{k})\right)\\
&=\min\left(B_{k}+c_{M-1}\theta^2\delta_{\epsilon}^2-f(x^{M-2}_{succ,k}),\ldots,B_{k}+c_{1}\theta^2\delta_{\epsilon}^2-f(x_{k}),B_{k}-f(x_{k}+\delta_{k}d_{k})\right).
\end{align*}
Using the definition of $B_{k}$ and~(\ref{22}) gives
\begin{align*}
&\min\left(B_{k}+c_{M-1}\theta^2\delta_{\epsilon}^2-f(x^{M-2}_{succ,k}),\ldots,B_{k}+c_{1}\theta^2\delta_{\epsilon}^2-f(x_{k}),B_{k}-f(x_{k}+\delta_{k}d_{k})\right)\\
&\geq\min\left(c_{M-1}\theta^2\delta_{\epsilon}^2-c_{M-2}\theta^2\delta_{\epsilon}^2,\ldots,c_{2}\theta^2\delta_{\epsilon}^2-c_{1}\theta^2\delta_{\epsilon}^2,c_{1}\theta^2\delta_{\epsilon}^2,\frac{c}{M}\theta^2\delta_{\epsilon}^2\right)\\
&=\frac{c}{M}\theta^2\delta_{\epsilon}^2,
\end{align*}
which concludes the proof.
\end{proof}

As the merit function decrease a fixed amount after each successful iteration, the following theorem bounds the number of successful iterations before the function value is reduced to optimality. As a result, we obtain a bound on the total number of iterations for Algorithm~\ref{algo1pss}.
\begin{theorem}
Let Assumption~\ref{a21} and~\ref{a22} hold. Let $c_{i}=\frac{i}{M}c$, $1\leq i\leq M-1$. In Algorithm~\ref{algo1pss}, suppose that $\delta_{\epsilon}\leq\delta_{0}$. We have
\begin{align*}
N_{succ,T_{\epsilon}}\leq\frac{f(x_{0})-f^{*}}{\frac{c}{M}\theta^2\delta_{\epsilon}^2}.
\end{align*}
Furthermore,
\begin{align*}
T_{\epsilon}\leq\frac{\ln{\frac{\delta_{0}\sqrt{n}(\Lf+2c)}{2\theta\epsilon}}}{\ln{\frac{1}{\theta}}}+\frac{\ln{\frac{\gamma}{\theta}}}{\ln{\frac{1}{\theta}}}\frac{nM(\Lf+2c)^2}{4c\theta^2\epsilon^2}\left(f\left(x_{0}\right)-f^{*}\right).
\end{align*}
\end{theorem}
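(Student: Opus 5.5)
The plan is a telescoping argument on the merit function $\phi_k$, combined with Lemma~\ref{lemma2.3}. First I would record two structural facts about $\phi_k$. It is nonnegative, because $\phi_k \ge f(x_k)-f^*\ge 0$ by Assumption~\ref{a21}. It is also nonincreasing across iterations. On an unsuccessful iteration nothing in the window changes: $x_{k+1}=x_k$, and the successful iterates $x^{i}_{succ,k+1}=x^{i}_{succ,k}$ are unchanged. Since $\phi_k$ uses the constant threshold $\theta^2\delta_\epsilon^2$ rather than $\delta_k^2$, we get $\phi_{k+1}=\phi_k$.

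On a successful iteration $k$, the window shifts. The new list of successful iterates is $x^{1}_{succ,k+1}=x_k$ and $x^{i}_{succ,k+1}=x^{i-1}_{succ,k}$ for $i\ge 2$, and the current point is $x_{k+1}=x_k+\delta_kd_k$. Therefore $\phi_{k+1}+f^*$ equals exactly the second maximum appearing in Lemma~\ref{lemma2.4}. That lemma, valid for $k\le T_\epsilon$ under $\delta_\epsilon\le\delta_0$, then gives $\phi_k-\phi_{k+1}\ge \frac{c}{M}\theta^2\delta_\epsilon^2$.

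One bookkeeping point needs care: the convention that $x^{i}_{succ,k}=x_0$ when fewer than $i$ successes have occurred. I would check that the shift identity still holds in that case. When the window is not yet full, the padded entries are $x_0$, and the newly padded entry is again $x_0$, so the list still shifts consistently.

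Next I would sum over iterations $0\le k<T_\epsilon$. Each of the $N_{succ,T_\epsilon}$ successful iterations contributes a decrease of at least $\frac{c}{M}\theta^2\delta_\epsilon^2$, and unsuccessful ones contribute zero. This gives
\begin{align*}
N_{succ,T_\epsilon}\frac{c}{M}\theta^2\delta_\epsilon^2\le \phi_0-\phi_{T_\epsilon}\le \phi_0 .
\end{align*}
At $k=0$ every stored point is $x_0$, so $\phi_0=\max(f(x_0)-c_{M-1}\theta^2\delta_\epsilon^2,\ldots,f(x_0))-f^*=f(x_0)-f^*$. This yields the first bound. If $T_\epsilon=\infty$, the same argument applied to any finite horizon $k$ bounds $N_{succ,k}$ uniformly, which in turn forces $k$ to be bounded via Lemma~\ref{lemma2.3}. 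Hence $T_\epsilon<\infty$, so there is no circularity.

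For the second bound, I would apply Lemma~\ref{lemma2.3} with $k=T_\epsilon$:
\begin{align*}
T_\epsilon\le \frac{\ln\frac{\delta_0}{\theta\delta_\epsilon}}{\ln\frac1\theta}+\frac{\ln\frac{\gamma}{\theta}}{\ln\frac1\theta}N_{succ,T_\epsilon}.
\end{align*}
Then I would substitute $\delta_\epsilon=\frac{2\epsilon}{\sqrt n(\Lf+2c)}$. This gives $\frac{\delta_0}{\theta\delta_\epsilon}=\frac{\delta_0\sqrt n(\Lf+2c)}{2\theta\epsilon}$ and
\begin{align*}
\frac{M}{c\theta^2\delta_\epsilon^2}=\frac{nM(\Lf+2c)^2}{4c\theta^2\epsilon^2},
\end{align*}
which are exactly the stated constants.

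The main obstacle is the identification of $\phi_{k+1}$ with the second maximum in Lemma~\ref{lemma2.4}, in particular matching the correction indices after the shift (the old $x^{M-1}_{succ,k}$ drops out, and each $c_i$ attaches to the right point). A secondary concern is that Lemma~\ref{lemma2.4} requires $k\le T_\epsilon$, which covers every iteration in the sum.
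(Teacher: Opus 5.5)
Your proposal is correct and follows essentially the same argument as the paper: telescope $\phi_k$, use Lemma~\ref{lemma2.4} on successful iterations and $\phi_{k+1}=\phi_k$ on unsuccessful ones, note $\phi_0=f(x_0)-f^*$, and then insert the resulting bound on $N_{succ,T_\epsilon}$ into Lemma~\ref{lemma2.3} with $k=T_\epsilon$. Summing only over $k<T_\epsilon$ (the paper sums up to $T_\epsilon$ and discards the last success) and explicitly ruling out $T_\epsilon=\infty$ are small refinements. Your blanket claim that $\phi_k$ is nonincreasing is only guaranteed for $k\le T_\epsilon$, but that is the only range you use.
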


\begin{proof}
Let $s_{k}$ denote the index of the $k$-th successful iteration in Algorithm~\ref{algo1pss}. 
Lemma~\ref{lemma2.4} and the mechanism of Algorithm~\ref{algo1pss} give
\begin{align*}
\sum_{0\leq k\leq T_{\epsilon}}\left(\phi_{k}-\phi_{k+1}\right)&=\sum_{\{j:0\leq s_{j}\leq T_{\epsilon}\}}\left(\phi_{s_{j}}-\phi_{s_{j}+1}\right)\\
&\geq\sum_{\{k:0\leq s_{k}\leq T_{\epsilon}\}}\frac{c}{M}\theta^2\delta_{\epsilon}^2\\
&\geq\sum_{\{k:0\leq s_{k}\leq T_{\epsilon}-1\}}\frac{c}{M}\theta^2\delta_{\epsilon}^2\\
&=N_{succ,T_{\epsilon}}\frac{c}{M}\theta^2\delta_{\epsilon}^2.
\end{align*}
We note that $\sum_{0\leq k\leq T_{\epsilon}}\left(\phi_{k}-\phi_{k+1}\right)=\phi_{0}-\phi_{T_{\epsilon}+1}\leq\phi_{0}$. Therefore, we have
\begin{align*}
N_{succ,T_{\epsilon}}\frac{c}{M}\theta^2\delta_{\epsilon}^2\leq\phi_{0}.
\end{align*}
Note that, by definition, $x_{succ,0}^{0}=\cdots=x_{succ,0}^{M-1}=x_{0}$, which implies $\phi_{0}=f(x_{0})-f^{*}$. Then, applying Lemma~\ref{lemma2.3} with $k=T_{\epsilon}$ delivers the claimed result after substituting $\delta_{\epsilon}=\frac{2\epsilon}{\sqrt{n}(\Lf+2c)}$.
\end{proof}

The complexity of Algorithm~\ref{algo1pss} is $\mathcal{O}(n\epsilon^{-2})$, which matches the complexity of standard direct-search methods in number of iterations.

\subsection{Max-$M$ non-monotone direct search with probabilistic descent}

In this section, we present Algorithm~\ref{algo1}, which replaces the complete positive-spanning-set poll by a single random direction. This practice was first used in the probabilistic-descent assumption~\cite{gratton2015direct}, reducing the search cost per iteration at the expense of guaranteeing a useful direction only with a sufficiently large probability. In contrast to the monotone method in~\cite{gratton2015direct},  Algorithm~\ref{algo1} uses the maximum of the last $M$ distinct iterates in the decrease condition and chooses one random direction $D_k$ in each step. We note that a practical version of Algorithm~\ref{algo1} searches for a direction from $\{D_k, - D_k\}$ where $D_k$ is sampled uniformly at random from the unit sphere. 

\begin{algorithm}[H]
\caption{Max-$M$ Non-monotone Direct Search Based on Probabilistic Descent}\label{algo1}
\begin{algorithmic}[1]
\STATE{Initialization. Choose integer $M>1$, $c>0$, $X_0$, $\Delta_0$, $\theta\in(0,1)$, $\gamma\in(1,\infty)$.}
\FOR{$k=0,1,\ldots$}
\STATE{Uniformly select a random direction $D_k$ from the unit sphere.}
\STATE{For $i\in\mathbb{N}^{+}$, if there is an $i$-th latest successful iterate point from $X_{k}$, then record it with~$X_{succ,k}^{i}$. Otherwise let $X_{succ,k}^{i}$ be $X_{0}$.}
\IF{
\begin{align}\label{sdcd}
\max\left(f(X_{succ,k}^{M-1}),\ldots,f(X_{succ,k}^{1}),f(X_{k})\right)-f(X_{k}+\Delta_{k}D_k)\geq c\Delta_k^2,
\end{align}
}
\STATE{Set $X_{k+1}=X_k+\Delta_k D_k$ and $\Delta_{k+1}=\gamma\Delta_{k}$. This iteration is successful.}
\ELSE
\STATE{Set $X_{k+1}=X_k$ and $\Delta_{k+1}=\theta\Delta_{k}$. This iteration is not successful.}
\ENDIF
\ENDFOR
\end{algorithmic}
\end{algorithm}

For the purpose of the merit-function analysis, we record the successful stepsizes $\Delta_{succ,k}^{i}$  associated with the $i$-th latest successful iterate $X_{succ,k}^{i}$ for $i\in\mathbb{N}^{+}$. Hence $\Delta_{succ,k}^{i}=\Delta_{0}$ for $X_{succ,k}^{i}=X_{0}$. We also use the notations
$X_{succ,k}^{0}=X_{k}$ and $\Delta_{succ,k}^{0}=\Delta_{k}$ for convenience. 

To formalize conditioning on the past, let $\mathcal{F}_{k}$ denote the $\sigma$-algebra generated by randomness up to the end of iteration $k-1$. For Algorithm~\ref{algo1}, the randomness is from the previous polling directions $D_0,\ldots,D_{k-1}$. Similar to \cite{gratton2015direct}, we assume the following probabilistic-descent condition on the random directions.
\begin{assumption}\label{a11}
The sequence $D_k$ is $p$-probabilistically $\kappa$-descent, i.e., 
there exist $\kappa\in(0,1]$ and $p\in(0,1]$ such that, for every $k<T_\epsilon$,
\begin{align*}
P\left(\text{cm}(D_k, -\nabla f(X_k))\geq \kappa\,\middle|\,\mathcal{F}_k\right)\geq p, 
\end{align*}
where $\text{cm}(a, b)$ denotes the cosine measure 
for the vectors $a, b \in \mathbb{R}^n$. 
\end{assumption}
Assumption~\ref{a11} states that the cosine measure of the random direction and the descent direction is favorable for some probability $p$. Hence, it is verifiable and independent of the algorithmic construction. For example, if $D_k$ is uniformly generated from a unit sphere, the direction is descent with $\kappa \geq 1/ (7\sqrt{n})$ for a sufficiently large probability \cite{gratton2015direct}. This assumption enables the acceptance of the non-monotone decrease condition, which we demonstrate below. 

\begin{lemma}\label{lem:prob-success}
Let Assumptions~\ref{a22} and~\ref{a11} hold and define
\begin{align*}
\delta_\epsilon=\frac{2\kappa\epsilon}{\Lf+2c}.
\end{align*}
Then, for every $k<T_\epsilon$,
\begin{align*}
P\left(\max\left(f(X_{succ,k}^{M-1}),\ldots,f(X_{succ,k}^{1}),f(X_{k})\right)-f(X_{k}+\Delta_{k}D_k)\geq c\Delta_k^2 \mid\mathcal F_k\right)\mathbf 1_{\{\Delta_k\leq\delta_\epsilon\}}
\geq p\mathbf 1_{\{\Delta_k\leq\delta_\epsilon\}}.
\end{align*}
\end{lemma}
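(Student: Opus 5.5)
The plan is to mirror the proof of Lemma~\ref{lemma2.1}, now applied conditionally on $\mathcal F_k$, with the cosine-measure bound for the positive spanning set replaced by the event supplied by Assumption~\ref{a11}. Note that $X_k$, $\Delta_k$ and the stored iterates $X^{i}_{succ,k}$ are all $\mathcal F_k$-measurable, since they depend only on $D_0,\ldots,D_{k-1}$. Hence the indicator $\mathbf 1_{\{\Delta_k\le\delta_\epsilon\}}$ can be pulled inside or outside the conditional probability. It therefore suffices to prove the following inclusion on the event $\{\Delta_k\le\delta_\epsilon\}\cap\{k<T_\epsilon\}$:
\begin{align*}
\{\text{cm}(D_k,-\nabla f(X_k))\ge\kappa\}\subseteq\{\text{condition~(\ref{sdcd}) holds}\}.
\end{align*}
Once this is established, monotonicity of conditional probability and Assumption~\ref{a11} give the bound $p$.

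To prove the inclusion, I would use the Lipschitz inequality from the proof of Lemma~\ref{lemma2.1} with $x=X_k$, $y=X_k+\Delta_kD_k$ and $\|D_k\|=1$. This gives
\begin{align*}
f(X_k)-f(X_k+\Delta_kD_k)\ge-\Delta_k\nabla f(X_k)^\top D_k-\frac{\Lf}{2}\Delta_k^2 .
\end{align*}
On the descent event, $-\nabla f(X_k)^\top D_k\ge\kappa\|\nabla f(X_k)\|$. Moreover $k<T_\epsilon$ gives $\|\nabla f(X_k)\|>\epsilon$. Combining these, the right-hand side exceeds $\kappa\epsilon\Delta_k-\frac{\Lf}{2}\Delta_k^2$. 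When $\Delta_k\le\delta_\epsilon=2\kappa\epsilon/(\Lf+2c)$, we have $\kappa\epsilon\ge(\Lf/2+c)\Delta_k$, so this quantity is at least $c\Delta_k^2$.

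Finally, the maximum over $f(X^{M-1}_{succ,k}),\ldots,f(X^{1}_{succ,k}),f(X_k)$ is at least $f(X_k)$. So the left-hand side of~(\ref{sdcd}) dominates $f(X_k)-f(X_k+\Delta_kD_k)\ge c\Delta_k^2$, which is exactly the claimed inclusion.

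The only delicate point is the measure-theoretic one. First, the event $\{k<T_\epsilon\}$ must be $\mathcal F_k$-measurable, which holds because $T_\epsilon$ is a stopping time with respect to the $X_k$. Second, Assumption~\ref{a11} must be invoked pointwise on the $\mathcal F_k$-measurable event $\{\Delta_k\le\delta_\epsilon\}$. I would handle both by writing
\begin{align*}
P(\cdot\mid\mathcal F_k)\mathbf 1_{\{\Delta_k\le\delta_\epsilon\}}\ge P(\text{cm}\ge\kappa\mid\mathcal F_k)\mathbf 1_{\{\Delta_k\le\delta_\epsilon\}}\ge p\mathbf 1_{\{\Delta_k\le\delta_\epsilon\}}.
\end{align*}
The rest of the argument is routine.
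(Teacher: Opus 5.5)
Your proposal is correct and follows the paper's proof step for step. The steps are the same: the descent inequality borrowed from Lemma~\ref{lemma2.1}; the cosine-measure event of Assumption~\ref{a11} combined with $\|\nabla f(X_k)\|>\epsilon$ and $\Delta_k\le\delta_\epsilon$ to get $f(X_k)-f(X_k+\Delta_kD_k)\ge c\Delta_k^2$; bounding the max-$M$ reference value below by $f(X_k)$; and monotonicity of conditional probability, where your explicit treatment of the $\mathcal F_k$-measurability of $\{\Delta_k\le\delta_\epsilon\}$ and $\{k<T_\epsilon\}$ makes precise the paper's closing phrase ``taking conditional probabilities.''
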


\begin{proof}
Using the argument in Lemma~\ref{lemma2.1} we have 
\begin{align}
f(X_k)-f(X_k+\Delta_kD_k)&\geq-\nabla f(X_k)^\top (\Delta_kD_k)-\frac{L}{2}\Delta_k^2.
\end{align}
Using the cosine measure in Assumption~\ref{a11}, $\|\nabla f(X_k)\| \geq \epsilon $ and $\|D_k\| = 1$ , we have 
\begin{align*}
f(X_k)-f(X_k+\Delta_kD_k)
&\geq\kappa\epsilon\Delta_k-\frac{\Lf}{2}\Delta_k^2
\geq c\Delta_k^2,
\end{align*}
where the last line follows from the fact that $\Delta_k\leq\delta_\epsilon$. 
Hence, the max-$M$ acceptance condition holds for every $k \leq T_\epsilon$. Taking conditional probabilities proves the result.
\end{proof}

Recall from~(\ref{meritd}) that the non-negative merit function sequence is defined by
\begin{align}\label{merits}
\Phi_k=\max\left(f(X_{succ,k}^{M-1})-c_{M-1}\Delta_{k}^{2},\ldots,f(X_{succ,k}^{1})-c_{1}\Delta_{k}^{2},f(X_{k})\right)-f^*+\eta\Delta_k^2, 
\end{align}
where $0<c_1<\cdots<c_{M-1}<\eta$. 
The following lemma specifies the constants $\{c_i\}$ and bounds the change in the first term of this merit function when the iteration is successful.

\begin{lemma}\label{l11}
Let $c_{i}=\frac{c}{\gamma^2}\frac{1-\gamma^{-2i}}{1-\gamma^{-2M}}$, $1\leq i\leq M-1$. In Algorithm~\ref{algo1}, from the satisfaction of the sufficient decrease condition~(\ref{sdcd}) at iteration $k$, we have
\begin{align*}
&\max\left(f(X_{succ,k}^{M-1})-c_{M-1}\Delta_{k}^{2},\ldots,f(X_{succ,k}^{1})-c_{1}\Delta_{k}^{2},f(X_{k})\right) \\
&-\max\left(f(X^{M-2}_{succ,k})-c_{M-1}\gamma^2\Delta_{k}^{2},\ldots,f(X_{succ,k}^{1})-c_{2}\gamma^2\Delta_{k}^{2},f(X_{k})-c_{1}\gamma^2\Delta_{k}^{2},f(X_{k}+\Delta_{k}D_{k})\right)\\
&\geq c\frac{1-\gamma^{-2}}{1-\gamma^{-2M}}\Delta_k^2.
\end{align*}
\end{lemma}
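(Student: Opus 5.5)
The proof follows the template of Lemma~\ref{lemma2.4}, with $\theta^2\delta_\epsilon^2$ replaced by the actual stepsize $\Delta_k^2$ and the new weights $c_i\gamma^2$ after the shift. Denote
\begin{align*}
B_k=\max\left(f(X_{succ,k}^{M-1})-c_{M-1}\Delta_{k}^{2},\ldots,f(X_{succ,k}^{1})-c_{1}\Delta_{k}^{2},f(X_{k})\right),
\end{align*}
and set $c_0=0$, so that $B_k=\max_{0\le i\le M-1}\bigl(f(X_{succ,k}^{i})-c_i\Delta_k^2\bigr)$.

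\textbf{Step 1: write the difference as a minimum.} Following the proof of Lemma~\ref{lemma2.4}, $B_k$ minus the second max equals the minimum of $M$ terms:
\begin{align*}
B_k+c_{i+1}\gamma^2\Delta_k^2-f(X_{succ,k}^{i}) \quad (0\le i\le M-2), \qquad B_k-f(X_k+\Delta_kD_k).
\end{align*}

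\textbf{Step 2: bound the shifted terms.} For $0\le i\le M-2$, the definition of $B_k$ gives $B_k\ge f(X_{succ,k}^{i})-c_i\Delta_k^2$. Hence the $i$-th term is at least $(c_{i+1}\gamma^2-c_i)\Delta_k^2$.

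\textbf{Step 3: bound the new-point term.} Since $c_i\le c_{M-1}$ for all $i$, we have $B_k\ge \max_i f(X_{succ,k}^i)-c_{M-1}\Delta_k^2$. Combined with the acceptance condition~(\ref{sdcd}), this yields
\begin{align*}
B_k-f(X_k+\Delta_kD_k)\ge (c-c_{M-1})\Delta_k^2.
\end{align*}

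\textbf{Step 4: verify the constants.} The only real work is checking that every lower bound is at least $c\frac{1-\gamma^{-2}}{1-\gamma^{-2M}}$. Write $q=\gamma^{-2}$ and $A=c/(1-q^M)$, so that $c_i=Aq(1-q^i)$.
\begin{itemize}
\item For $i=0$: $c_1\gamma^2=A(1-q)$.
\item For $1\le i\le M-2$: $c_{i+1}\gamma^2-c_i=A(1-q^{i+1})-Aq(1-q^i)=A(1-q)$.
\item For the new-point term: $c-c_{M-1}=A\bigl(1-q^M-q+q^M\bigr)=A(1-q)$.
\end{itemize}
All $M$ terms therefore equal exactly $A(1-q)\Delta_k^2=c\frac{1-\gamma^{-2}}{1-\gamma^{-2M}}\Delta_k^2$, which proves the claim. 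The geometric choice of $c_i$ is precisely what equalizes these bounds.

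This telescoping identity is the only delicate point. One should also confirm that $c_i$ is increasing and that $c_{M-1}<c$, which is immediate because $q<1$.
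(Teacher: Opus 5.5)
Your proof is correct and follows the paper's argument. You define $B_k$, use the monotonicity of the $c_i$ together with~(\ref{sdcd}) to get the $(c-c_{M-1})\Delta_k^2$ bound, rewrite the difference as a minimum of $M$ terms, and bound each term via the definition of $B_k$. Your Step~4 makes explicit the ``basic calculations'' the paper omits, namely that the geometric choice of $c_i$ makes all $M$ lower bounds equal to $c\frac{1-\gamma^{-2}}{1-\gamma^{-2M}}\Delta_k^2$.
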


\begin{proof}
Denote $B_{k}=\max\left(f(X_{succ,k}^{M-1})-c_{M-1}\Delta_{k}^{2},\ldots,f(X_{succ,k}^{1})-c_{1}\Delta_{k}^{2},f(X_{k})\right)$. Since $c_{i}$, $1\leq i\leq M-1$, are monotonically increasing, one has
\begin{align*}
B_{k}&=\max\left(f(X_{succ,k}^{M-1})-c_{M-1}\Delta_{k}^{2},\ldots,f(X_{succ,k}^{1})-c_{1}\Delta_{k}^{2},f(X_{k})\right)\\
&\geq\max\left(f(X_{succ,k}^{M-1})-c_{M-1}\Delta_{k}^{2},\ldots,f(X_{succ,k}^{1})-c_{M-1}\Delta_{k}^{2},f(X_{k})-c_{M-1}\Delta_{k}^{2}\right)\\
&=\max\left(f(X_{succ,k}^{M-1}),\ldots,f(X_{succ,k}^{1}),f(X_{k})\right)-c_{M-1}\Delta_{k}^{2},
\end{align*}
which, together with~(\ref{sdcd}), implies
\begin{align}
B_{k}-f(X_{k}+\Delta_{k}D_{k})&\geq\max\left(f(X_{succ,k}^{M-1}),\ldots,f(X_{succ,k}^{1}),f(X_{k})\right)-c_{M-1}\Delta_{k}^{2}-f(X_{k}+\Delta_{k}D_{k})\notag\\
&\geq (c-c_{M-1})\Delta_{k}^2.\label{l111}
\end{align}
Note that
\begin{align*}
&B_{k}-\max\left(f(X^{M-2}_{succ,k})-c_{M-1}\gamma^2\Delta_{k}^{2},\ldots,f(X_{succ,k}^{1})-c_{2}\gamma^2\Delta_{k}^{2},f(X_{k})-c_{1}\gamma^2\Delta_{k}^{2},f(X_{k}+\Delta_{k}D_{k})\right)\\
&=B_{k}+\min\left(c_{M-1}\gamma^2\Delta_{k}^{2}-f(X^{M-2}_{succ,k}),\ldots,c_{1}\gamma^2\Delta_{k}^{2}-f(X_{k}),-f(X_{k}+\Delta_{k}D_{k})\right)\\
&=\min\left(B_{k}+c_{M-1}\gamma^2\Delta_{k}^{2}-f(X^{M-2}_{succ,k}),\ldots,B_{k}+c_{1}\gamma^2\Delta_{k}^{2}-f(X_{k}),B_{k}-f(X_{k}+\Delta_{k}D_{k})\right).
\end{align*}
Using the definition of $B_{k}$ and~(\ref{l111}) gives
\begin{align*}
&\min\left(B_{k}+c_{M-1}\gamma^2\Delta_{k}^{2}-f(X^{M-2}_{succ,k}),\ldots,B_{k}+c_{1}\gamma^2\Delta_{k}^{2}-f(X_{k}),B_{k}-f(X_{k}+\Delta_{k}D_{k})\right)\notag\\
&\geq\min\left(c_{M-1}\gamma^2\Delta_k^2-c_{M-2}\Delta_k^2,\ldots,c_{2}\gamma^2\Delta_k^2-c_{1}\Delta_k^2,c_{1}\gamma^2\Delta_k^2,(c-c_{M-1})\Delta_{k}^2\right),
\end{align*}
which concludes the proof after some basic calculations.
\end{proof}

We  present the sufficient decrease lemma for the merit function of Algorithm~\ref{algo1} below.
\begin{lemma}\label{l3.2}
There exist constants $c_{i}=\frac{c}{\gamma^2}\frac{1-\gamma^{-2i}}{1-\gamma^{-2M}}$, $1\leq i\leq M-1$, $\eta=\frac{c\left(\gamma^2-\theta^2-(1-\theta^2)\gamma^{2-2M}\right)}{\gamma^2(1-\gamma^{-2M})(\gamma^2-\theta^2)}$, and $\nu=\frac{c(\gamma^2-1)(1-\theta^2)\gamma^{-2M}}{(1-\gamma^{-2M})(\gamma^2-\theta^2)}>0$ such that
\begin{align*}
    \Phi_k-\Phi_{k+1}\geq\nu\Delta^2_k.
\end{align*}
\end{lemma}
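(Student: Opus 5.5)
The proof splits into the two possible outcomes of iteration $k$. In each case I will bound $\Phi_k-\Phi_{k+1}$ from below by an explicit multiple of $\Delta_k^2$, and then show that the stated $\eta$ makes the two multiples coincide, with common value $\nu$. Throughout, write $A=\frac{c}{1-\gamma^{-2M}}$, so that $c_i=A\gamma^{-2}(1-\gamma^{-2i})$. In particular $c_{M-1}=A(\gamma^{-2}-\gamma^{-2M})<c$, and $\Phi_k$ is well defined with $0<c_1<\cdots<c_{M-1}$. I will also check $c_{M-1}<\eta$; this follows from $\nu>0$ below.

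\emph{Successful iteration.} Here $X_{k+1}=X_k+\Delta_kD_k$ and $\Delta_{k+1}=\gamma\Delta_k$. The memory window shifts: $X^{i}_{succ,k+1}=X^{i-1}_{succ,k}$ for $1\le i\le M-1$, with $X^{0}_{succ,k}=X_k$, and the oldest stored point $X^{M-1}_{succ,k}$ is dropped. The corrected maximum in $\Phi_{k+1}$ is therefore exactly the second maximum appearing in Lemma~\ref{l11}, with corrections $c_i\gamma^2\Delta_k^2$. I need to handle the bookkeeping when fewer than $M-1$ successes have occurred and the stored points equal $X_0$; repeated entries are harmless because they only add terms that are still dominated. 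Lemma~\ref{l11} then gives a decrease of at least $A(1-\gamma^{-2})\Delta_k^2$ in the max part. The $\eta$ part increases by $\eta(\gamma^2-1)\Delta_k^2$. Hence
\begin{align*}
\Phi_k-\Phi_{k+1}\ge \left[A(1-\gamma^{-2})-\eta(\gamma^2-1)\right]\Delta_k^2 .
\end{align*}

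\emph{Unsuccessful iteration.} Here all stored points are unchanged and $\Delta_{k+1}=\theta\Delta_k$. Each corrected entry $f(X^i_{succ,k})-c_i\theta^2\Delta_k^2$ exceeds its old value $f(X^i_{succ,k})-c_i\Delta_k^2$ by $c_i(1-\theta^2)\Delta_k^2$, which is at most $c_{M-1}(1-\theta^2)\Delta_k^2$. The uncorrected entry $f(X_k)$ is unchanged. So the max part increases by at most $c_{M-1}(1-\theta^2)\Delta_k^2$, while the $\eta$ part drops by $\eta(1-\theta^2)\Delta_k^2$. This gives
\begin{align*}
\Phi_k-\Phi_{k+1}\ge (\eta-c_{M-1})(1-\theta^2)\Delta_k^2 .
\end{align*}

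\emph{Balancing.} Next I set the two coefficients equal and solve the resulting linear equation:
\begin{align*}
\eta\left[(\gamma^2-1)+(1-\theta^2)\right]=A(1-\gamma^{-2})+A(1-\theta^2)(\gamma^{-2}-\gamma^{-2M}).
\end{align*}
Since $(\gamma^2-1)+(1-\theta^2)=\gamma^2-\theta^2$, this yields
\begin{align*}
\eta=\frac{A\left(1-\theta^2\gamma^{-2}-(1-\theta^2)\gamma^{-2M}\right)}{\gamma^2-\theta^2},
\end{align*}
which is algebraically the stated $\eta$. Substituting back, the common value is $\nu=(\eta-c_{M-1})(1-\theta^2)$. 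Simplifying $\eta-c_{M-1}$ over the denominator $\gamma^2-\theta^2$ should give $A(\gamma^2-1)\gamma^{-2M}/(\gamma^2-\theta^2)$, and hence the stated $\nu$. It is positive because $\gamma>1>\theta$, and positivity of $\nu$ also confirms $c_{M-1}<\eta$.

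The only genuinely delicate step is the successful case, namely matching the shifted window of $\Phi_{k+1}$ to the expression in Lemma~\ref{l11}. Once that identification is made, the rest is routine algebra, which I would verify carefully but which poses no conceptual difficulty.
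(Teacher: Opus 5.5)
Your proposal is correct and follows the paper's proof essentially step for step. The successful case uses Lemma~\ref{l11} plus the $\eta(1-\gamma^2)\Delta_k^2$ term, the unsuccessful case bounds the increase of the corrected maximum by $c_{M-1}(1-\theta^2)\Delta_k^2$, and your algebra checks out (e.g.\ $\eta-c_{M-1}=\frac{c(\gamma^2-1)\gamma^{-2M}}{(1-\gamma^{-2M})(\gamma^2-\theta^2)}$, so both bounds equal $\nu\Delta_k^2$). The only differences are cosmetic: you derive $\eta$ by balancing the two cases rather than just verifying it, and you argue the unsuccessful case entrywise instead of via the paper's $\max(a,x)-\max(a,y)\geq x-y$ device.
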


\begin{proof}
We consider separately whether iteration $k$ is successful or not. Using the mechanism of the algorithm and Lemma~\ref{l11}, if iteration $k$ is successful, then~(\ref{sdcd}) holds and
\begin{align*}
&\Phi_k-\Phi_{k+1}\\
&=\max\left(f(X_{succ,k}^{M-1})-c_{M-1}\Delta_{k}^{2},\ldots,f(X_{succ,k}^{1})-c_{1}\Delta_{k}^{2},f(X_{k})\right)+\eta(1-\gamma^2)\Delta_{k}^{2}\\
&-\max\left(f(X^{M-2}_{succ,k})-c_{M-1}\gamma^2\Delta_{k}^{2},\ldots,f(X_{succ,k}^{1})-c_{2}\gamma^2\Delta_{k}^{2},f(X_{k})-c_{1}\gamma^2\Delta_{k}^{2},f(X_{k}+\Delta_{k}D_{k})\right)\\
&\geq\left(c\frac{1-\gamma^{-2}}{1-\gamma^{-2M}}+\eta(1-\gamma^2)\right)\Delta_k^2\\
&=\nu\Delta_k^2.
\end{align*}
We will use the facts that $\max(a,b,c)=\max(c,\max(a,b))$ and that if $x\leq y$, then $\max(a,x)-\max(a,y)\geq x-y$. Also note that $a-\max(b,c)=\min(a-b,a-c)$. These observations, together with the mechanism of the algorithm, imply that, if iteration $k$ is not successful, then
\begin{align*}
\Phi_k-\Phi_{k+1}=&\max\left(f(X_{succ,k}^{M-1})-c_{M-1}\Delta_{k}^{2},\ldots,f(X_{succ,k}^{1})-c_{1}\Delta_{k}^{2},f(X_{k})\right)+\eta(1-\theta^2)\Delta_{k}^{2}\\
&-\max\left(f(X_{succ,k}^{M-1})-c_{M-1}\theta^2\Delta_{k}^{2},\ldots,f(X_{succ,k}^{1})-c_{1}\theta^2\Delta_{k}^{2},f(X_{k})\right)\\
\geq&\max\left(f(X_{succ,k}^{M-1})-c_{M-1}\Delta_{k}^{2},\ldots,f(X_{succ,k}^{1})-c_{1}\Delta_{k}^{2})\right)+\eta(1-\theta^2)\Delta_k^2\\
&-\max\left(f(X_{succ,k}^{M-1})-c_{M-1}\theta^2\Delta_{k}^{2},\ldots,f(X_{succ,k}^{1})-c_{1}\theta^2\Delta_{k}^{2}\right)\\
\geq&\min\left((\theta^2-1)c_{M-1},\ldots,(\theta^2-1)c_{1}\right)\Delta_k^2 +\eta(1-\theta^2)\Delta_k^2\\
=&(\theta^2-1)c_{M-1}\Delta_k^2 +\eta(1-\theta^2)\Delta_k^2\\
=&\nu\Delta_k^2.
\end{align*}
Therefore, from the algorithm, one has
\begin{align*}
\Phi_k-\Phi_{k+1}&=(\Phi_k-\Phi_{k+1})(\mathbf{1}\{\text{iteration $k$ is successful}\}+\mathbf{1}\{\text{iteration $k$ is not successful}\})\\
&\geq\nu\Delta_k^2(\mathbf{1}\{\text{iteration $k$ is successful}\}+\mathbf{1}\{\text{iteration $k$ is not successful}\})\\
&=\nu\Delta_k^2.
\end{align*}
\end{proof}

We now explain how the stopping-time framework of Blanchet et al.~\cite{blanchet2019convergence} applies. This framework relies on the two following conditions.
\begin{enumerate}
    \item There exists a fixed threshold $\delta_\epsilon$ such that for $\Delta_k \leq \delta_\epsilon$ and $k<T_\epsilon$, the stepsize is multiplied by $\gamma$ with conditional probability at least $p$ and otherwise is multiplied by $\theta$. This is proved in Lemma~\ref{lem:prob-success}. In addition, we assume that $p\log\gamma+(1-p)\log\theta > 0$
    , which prevents the method from spending too many iterations at stepsizes much smaller than $\delta_\epsilon$.
    \item For $k<T_\epsilon$, the merit function satisfies $E[\Phi_k-\Phi_{k+1}|\mathcal{F}_{k}]\geq\nu\Delta^2_k$ for some values $\nu > 0$. This is proved in Lemma~\ref{l3.2}. 
\end{enumerate}
With these two conditions, the framework in \cite{blanchet2019convergence,ding2025sequential} studies a renewal–reward martingale process generated by the stepsize $\Delta_k$ and shows an upper bound for the expected stopping time $E[T_\epsilon]$ of the algorithm. We note that Blanchet et al.~\cite{blanchet2019convergence} analyzed the standard case with $\gamma \theta = 1$ and $p > \frac{1}{2}$, while Ding et al.~\cite{ding2025sequential} subsequently stated the corresponding bound for a general success probability and multiplicative factors that satisfy $p\log\gamma+(1-p)\log\theta > 0$.

\begin{theorem}[expected number of iterations]\label{t3.9}
Let Assumptions~\ref{a21},~\ref{a22}, and~\ref{a11} hold, let
$\delta_\epsilon=2\kappa\epsilon/(\Lf+2c)$, and suppose that $p\ln\gamma+(1-p)\ln\theta>0$.
Then
\begin{align*}
    E[T_\epsilon-1]\leq \frac{\ln{\gamma}}{p\ln{\gamma}+(1-p)\ln{\theta}}\cdot\frac{\Phi_0}{\nu\theta^2\delta_{\epsilon}^2} = \frac{\ln{\gamma}}{p\ln{\gamma}+(1-p)\ln{\theta}}\cdot\frac{\Phi_0 (\Lf+2c)^2}{4\nu\theta^2 \kappa^2\epsilon^2},
\end{align*}
where the first bound follows from the stopping-time framework~\cite{blanchet2019convergence}.
Consequently, the expected iteration complexity of Algorithm~\ref{algo1} is $\mathcal{O}(\epsilon^{-2})$.
\end{theorem}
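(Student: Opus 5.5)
The plan is to reduce the statement to the renewal–reward stopping-time argument of Blanchet et al.~\cite{blanchet2019convergence}, in the generalized form of Ding et al.~\cite{ding2025sequential}. Both hypotheses of that framework are already available. Lemma~\ref{lem:prob-success} gives the stepsize dynamics below $\delta_\epsilon$. Lemma~\ref{l3.2} gives the merit decrease, and it is in fact deterministic, so $E[\Phi_k-\Phi_{k+1}\mid\mathcal F_k]\geq\nu\Delta_k^2$ holds trivially.

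\textbf{Step 1 (reward accumulation).} Since $\Phi_k\geq0$, I will sum Lemma~\ref{l3.2} up to $T_\epsilon-1$ and take expectations, giving
\begin{align*}
\nu\,E\left[\sum_{k=0}^{T_\epsilon-1}\Delta_k^2\right]\leq\Phi_0 .
\end{align*}
This needs $T_\epsilon<\infty$ almost surely, which I will obtain from the same sum: each iteration with $\Delta_k\geq\theta\delta_\epsilon$ contributes at least $\nu\theta^2\delta_\epsilon^2$. It then suffices to show that such iterations recur infinitely often on $\{T_\epsilon=\infty\}$, and this follows from the random-walk argument in Step 2.

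\textbf{Step 2 (counting large-stepsize iterations).} Let $Y_k=\ln(\Delta_k/\delta_\epsilon)$. The goal is to bound $E[T_\epsilon]$ by a multiple of $E[N]$, where $N$ counts the iterations $k<T_\epsilon$ with $\Delta_k\geq\theta\delta_\epsilon$, because Step 1 gives
\begin{align*}
E[N]\leq\frac{\Phi_0}{\nu\theta^2\delta_\epsilon^2}.
\end{align*}
When $\Delta_k\leq\delta_\epsilon$ and $k<T_\epsilon$, Lemma~\ref{lem:prob-success} shows that $Y_{k+1}-Y_k$ stochastically dominates a $\{\ln\gamma,\ln\theta\}$-valued variable with success probability $p$. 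That variable has drift $\mu:=p\ln\gamma+(1-p)\ln\theta>0$.

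I will decompose the trajectory into excursions below the level $\ln\theta$ (that is, $\Delta_k<\theta\delta_\epsilon$), each starting right after a step from a large stepsize down to a small one. Within an excursion the walk stays below $0$, so the drift bound applies at every step. The maximal overshoot needed to return above $\ln\theta$ is at most $\ln\gamma$, namely from $Y\geq 2\ln\theta$ up to $\ln\theta$. By Wald's identity, or equivalently the supermartingale $Y_k-\mu k$ stopped at the excursion end, each excursion has expected length at most $\ln(1/\theta)/\mu$. Adding the large-stepsize iteration that precedes each excursion and simplifying should give the factor $\ln\gamma/\mu$ per large iteration, in the form quoted from \cite{ding2025sequential}, with the final $-1$ accounting for the initial iteration.

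\textbf{Step 3.} Multiplying the two bounds gives the first inequality. Substituting $\delta_\epsilon=2\kappa\epsilon/(\Lf+2c)$ then gives the explicit $\mathcal{O}(\epsilon^{-2})$ form.

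The main obstacle is Step 2. The excursions are truncated by $T_\epsilon$, which must be handled by optional stopping with $T_\epsilon\wedge$ (excursion end). The dominance in Lemma~\ref{lem:prob-success} holds only conditionally, so I will build a coupled i.i.d.\ Bernoulli walk that lower-bounds $Y_k$. The cleanest route is to invoke the stated theorem of \cite{ding2025sequential} directly once its two hypotheses are verified, rather than redoing the renewal-reward computation.
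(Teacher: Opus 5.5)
Your route is the paper's: the paper only notes that Lemma~\ref{lem:prob-success} supplies the stepsize dynamics below $\delta_\epsilon$ and Lemma~\ref{l3.2} supplies the merit decrease, and then quotes the renewal--reward bound. If you stop at that citation, you match it. If Step~2 is meant as an actual derivation, however, it does not give the stated constant as written. Your excursion-length bound $\ln(1/\theta)/\mu$ ignores the overshoot you yourself mention. An excursion starts at some $Y\ge 2\ln\theta$ and exits at some $Y<\ln\theta+\ln\gamma$, so optional stopping for the \emph{sub}martingale $Y_k-\mu k$ only yields $\ln(\gamma/\theta)/\mu$. The weaker bound is false in general: with $\gamma=4$, $\theta=1/2$, $p=0.9$ it equals $1/1.7<1$, yet every excursion lasts at least one iteration. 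Moreover, charging one excursion to every large iteration gives $1+\ln(1/\theta)/\mu=p\ln(\gamma/\theta)/\mu$, which is not $\ln\gamma/\mu$. The missing observation is that an excursion can only be triggered by an \emph{unsuccessful} iteration with $\theta\delta_\epsilon\le\Delta_k<\delta_\epsilon$. By Lemma~\ref{lem:prob-success}, such an iteration fails with conditional probability at most $1-p$. With both corrections, the expected cost per large iteration is $1+(1-p)\ln(\gamma/\theta)/\mu=\ln\gamma/\mu$ exactly. Hence $E[T_\epsilon]\le(\ln\gamma/\mu)\,\Phi_0/(\nu\theta^2\delta_\epsilon^2)$, and the $-1$ is pure slack.

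Step~1 in fact holds pathwise, because Lemma~\ref{l3.2} is deterministic. The partial sums of $\nu\Delta_k^2$ are bounded by $\Phi_0$ whether or not $T_\epsilon<\infty$, so no almost-sure finiteness is needed there. The real gap is the initial segment. If $\Delta_0<\theta\delta_\epsilon$, the first run of small stepsizes follows no large iteration and is not charged to $N$, and no ``$-1$'' can absorb it. As $\Delta_0\to0$, $\Phi_0\to f(X_0)-f^*$ stays bounded. Meanwhile $\|X_k-X_0\|\le\Delta_0(\gamma^k-1)/(\gamma-1)$ forces $T_\epsilon\ge\log_\gamma\bigl(1+(\gamma-1)(\|\nabla f(X_0)\|-\epsilon)/(\Lf\Delta_0)\bigr)\to\infty$ whenever $\|\nabla f(X_0)\|>\epsilon$. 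So you need an extra hypothesis such as $\Delta_0\ge\theta\delta_\epsilon$, the analogue of $\delta_\epsilon\le\delta_0$ imposed for Algorithm~\ref{algo1pss}. The framework of Blanchet et al.\ likewise assumes its threshold does not exceed $\Delta_0$, so this belongs on your list of hypotheses to verify.
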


The explicit constant $\nu$ in Lemma~\ref{l3.2} decreases geometrically with~$M$, so the resulting bound contains a factor of order $\gamma^{2M}$. In contrast, the complete-poll analysis gives linear dependence on~$M$. The geometric factor is a consequence of the particular feasible correction coefficients used here, not a lower bound for every max-$M$ method; hence, the possibility of sharper coefficients is open.

We note that another merit function $\Psi_{k}$ for Algorithm~\ref{algo1} can be constructed differently from~$\Phi_{k}$, which we present in detail in Appendix~\ref{app:merit}.
When extending to the stochastic objective functions, $\Psi_{k}$ has a disadvantage. 
Specifically, the decrease of $\Psi_{k}$ relies on the exactness of~$M$ sufficient decrease condition evaluations; therefore, error control in the algorithm for the stochastic objective function becomes relatively difficult when the stepsize is adaptive. The benefit of~$\Phi_{k}$ is that its decrease depends merely on the current sufficient decrease condition evaluation and extends more naturally to the stochastic case, as shown in the next section.

\section{Non-monotone direct search for stochastic functions}

A challenge in the stochastic setting arises when the past and the current stochastic function estimates in the decrease condition are not independent. A common technique to overcome this challenge is resampling the random estimates in new iterations. We propose a stochastic max-$M$ non-monotone direct-search algorithm, following this principle. 
Let $f(x)$ be the objective function of our interest and let $F(x,\xi)$ be its stochastic oracle. 
Our method extends Algorithm~\ref{algo1} by replacing the exact objective value in the non-monotone condition with an independent stochastic estimate, which we presented in Algorithm~\ref{algo2} as follows.

\begin{algorithm}[H]
\caption{Stochastic Max-$M$ Non-monotone Direct Search}\label{algo2}
\begin{algorithmic}[1]
\STATE{Initialization. Choose integer $M>1$, $c>0$, $X_0$, $\Delta_0$, $\theta\in(0,1)$, $\gamma\in(1,\infty)$.}
\FOR{$k=0,1,\ldots$}
\STATE{Uniformly select a random direction $D_k$ from the unit sphere.}
\STATE{For $i\in\mathbb{N}^{+}$, if there is an $i$-th latest successful iterate point from $X_{k}$, then record it with~$X_{succ,k}^{i}$. Otherwise let $X_{succ,k}^{i}$ be $X_{0}$. Let $X^{d}_{k}=X_{k}+\Delta_{k}D_{k}$ be the candidate point.}
\STATE{Query the stochastic oracle at $X_{succ,k}^{i}$, $1\leq i\leq M-1$, $X_{k}$, and $X^{d}_{k}$. Denote the query results by $F(X_{succ,k}^{i},\xi^{i}_{k,1})$, $1\leq i\leq M-1$, $F(X_{k},\xi_{k,2})$, and $F(X^{d}_{k},\xi_{k,3})$. Here the random variables $\xi^{i}_{k,1}$, $1\leq i\leq M-1$, $\xi_{k,2}$, and $\xi_{k,3}$ are independent.}
\IF{$\max\left(F(X_{succ,k}^{M-1},\xi^{M-1}_{k,1}),\ldots,F(X_{succ,k}^{1},\xi^{1}_{k,1}),F(X_{k},\xi_{k,2})\right)-F(X^{d}_{k},\xi_{k,3})\geq c\Delta_{k}^2$,}
\STATE{Set $X_{k+1}=X^{d}_{k}$ and $\Delta_{k+1}=\gamma\Delta_{k}$. This iteration is successful.}
\ELSE
\STATE{Set $X_{k+1}=X_k$ and $\Delta_{k+1}=\theta\Delta_{k}$. This iteration is not successful.}
\ENDIF
\ENDFOR
\end{algorithmic}
\end{algorithm}
\allowdisplaybreaks
We denote the following quantities for the analysis of Algorithm~\ref{algo2}.
\begin{align*}
    G_{k}&=\max\left(F(X_{succ,k}^{M-1},\xi^{M-1}_{k,1}),\ldots,F(X_{succ,k}^{1},\xi^{1}_{k,1}),F(X_{k},\xi_{k,2})\right)-F(X^{d}_{k},\xi_{k,3}),\\
    H_{k}&=\max\left(f(X_{succ,k}^{M-1}),\ldots,f(X_{succ,k}^{1}),f(X_{k})\right)-f(X^{d}_{k}),\\
    \varepsilon_{k}&=G_{k}-H_{k}.
\end{align*}
Here, $\varepsilon_{k}$ is the sampling noise of $G_{k}$ when estimating $H_{k}$. Hence, the random sources in the algorithm are $\{D_{k}\}$ and $\{\varepsilon_{k}\}$. Similar to the previous section, we let $\mathcal{F}_{k}$ be the $\sigma$-algebra generated by randomness up to the end of iteration $k-1$. In addition, let $\mathcal{F}_{k+1/2}$ denote the $\sigma$-algebra generated by randomness up to $D_{k}$. Therefore, $X^d_k$ and $H_{k}$ are $\mathcal{F}_{k+1/2}$-measurable; $G_{k}$, $X_{k+1}$, and $\Delta_{k+1}$ are $\mathcal{F}_{k+1}$-measurable. We denote the event $A_{k}:=\mathbf{1}(\text{iteration $k$ is successful})$.
Similar to the deterministic setting, we aim to bound $T_{\epsilon}$ for a given threshold $\epsilon>0$.
We impose the following assumption on the sampling error $\epsilon_k$.

\begin{assumption}\label{a23}
The stochastic oracle is accurate such that the distribution of $\varepsilon_{k}$ satisfies for each $k$
\begin{align*}
P(\varepsilon_{k}\geq0|\mathcal{F}_{k+1/2})&\geq p_{0}\\
P(\varepsilon_{k}\geq b|\mathcal{F}_{k+1/2})&\leq \frac{\beta\Delta_{k}^2}{b},\quad\forall b>0.
\end{align*}
\end{assumption}
{Assumption~\ref{a23} controls the two ways in which sampling error can affect the acceptance decision. The first condition ensures with probability at least $p_0$ that the estimated decrease does not understate the true decrease, while the second condition bounds the upper tail of the error. We note that this assumption concerns the error bound between $H_k$ and its stochastic estimate $G_k$, not for each function observation taken individually. The first condition can be attained for $p_0 = 1/2$ if the individual errors are symmetric. In the case of non-symmetric errors, the value of $p_0$ depends on the distribution of those errors. In general, our analysis does not require a lower bound on $p_0$ and thus the first condition is very mild. The second condition is a one-sided tail bound, which is common in the stochastic direct search literature \cite{rinaldi2024stochastic}. In Appendix~\ref{app:sampling}, we show that this condition can be satisfied when the number of samples are $\mathcal{O}(\Delta_k^{-4})$. We note that this order of samples is also standard in the literature \cite{rinaldi2024stochastic} to ensure the same (deterministic) iteration complexity for stochastic direct search.} 

We also need Assumption~\ref{a11}, which assumes good behavior of the random direction and therefore is independent of the stochastic oracles. By Lemma~\ref{lem:prob-success}, 
for every $k<T_\epsilon$, we have
\begin{align}\label{eq:hk}
P\left(H_k\geq c\Delta_k^2 \mid\mathcal F_k\right)\mathbf 1_{\{\Delta_k\leq\delta_\epsilon\}}
\geq p\mathbf 1_{\{\Delta_k\leq\delta_\epsilon\}}.
\end{align}
Independence between the direction and the subsequent oracle samples allows this probability and the probability in Assumption~\ref{a23} to combine into $pp_0$ in the following lemma.

\begin{lemma}\label{l4.1}
Let Assumptions~\ref{a22},~\ref{a11}, and~\ref{a23} hold and $\delta_\epsilon=\frac{2\kappa\epsilon}{\Lf+2c}$.
For any $\epsilon>0$ and $k<T_\epsilon$, we obtain
\begin{align*}
P(A_k=1|\mathcal{F}_{k})\mathbf{1}_{\{\Delta_k\leq\delta_\epsilon\}}\geq pp_{0}\mathbf{1}_{\{\Delta_k\leq\delta_\epsilon\}}.
\end{align*}
\end{lemma}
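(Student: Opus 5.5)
The plan is to show that success is implied by the intersection of two events and then combine their probabilities by conditioning on the intermediate $\sigma$-algebra $\mathcal{F}_{k+1/2}$.

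\emph{Step 1: success from two events.} Since $G_k = H_k + \varepsilon_k$, on the event
\begin{align*}
E_k := \{H_k \geq c\Delta_k^2\} \cap \{\varepsilon_k \geq 0\}
\end{align*}
we have $G_k \geq c\Delta_k^2$. By the acceptance test of Algorithm~\ref{algo2}, this means iteration $k$ is successful. Hence
\begin{align*}
\mathbf{1}\{A_k=1\} \geq \mathbf{1}\{H_k\geq c\Delta_k^2\}\,\mathbf{1}\{\varepsilon_k\geq 0\}.
\end{align*}

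\emph{Step 2: tower property.} Since $\mathcal{F}_k \subseteq \mathcal{F}_{k+1/2}$, we can condition in two stages. Both $H_k$ and $\Delta_k$ are $\mathcal{F}_{k+1/2}$-measurable, because $\Delta_k$ is $\mathcal{F}_k$-measurable and $H_k$ depends only on past iterates and $D_k$. So the first indicator can be pulled out of the inner conditional expectation:
\begin{align*}
P(A_k=1\mid\mathcal{F}_k) \geq E\left[\mathbf{1}\{H_k\geq c\Delta_k^2\}\, P(\varepsilon_k\geq 0\mid\mathcal{F}_{k+1/2}) \,\middle|\, \mathcal{F}_k\right] \geq p_0\, P(H_k\geq c\Delta_k^2\mid\mathcal{F}_k).
\end{align*}
The second inequality uses the first part of Assumption~\ref{a23}.

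\emph{Step 3: apply the descent bound.} Multiply both sides by $\mathbf{1}_{\{\Delta_k\leq\delta_\epsilon\}}$, which is $\mathcal{F}_k$-measurable and nonnegative. Then invoke \eqref{eq:hk}, the consequence of Lemma~\ref{lem:prob-success} with Assumptions~\ref{a22} and~\ref{a11}, valid for $k<T_\epsilon$. This gives
\begin{align*}
P(A_k=1\mid\mathcal{F}_k)\,\mathbf{1}_{\{\Delta_k\leq\delta_\epsilon\}} \geq p\,p_0\,\mathbf{1}_{\{\Delta_k\leq\delta_\epsilon\}}.
\end{align*}

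\emph{Main subtlety.} The delicate point is that $T_\epsilon$ is random. The statement "$k<T_\epsilon$" should be read as an $\mathcal{F}_k$-measurable event, so the bound is inserted with the indicator $\mathbf{1}_{\{k<T_\epsilon\}}$ exactly as in \eqref{eq:hk}. This is legitimate because $X_k$, and hence $\|\nabla f(X_k)\|$, is $\mathcal{F}_k$-measurable.

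The only other point is measurability: $H_k$ must be $\mathcal{F}_{k+1/2}$-measurable, which the paper states explicitly. No independence between $D_k$ and the oracle samples is needed beyond what the conditional form of Assumption~\ref{a23} already encodes.
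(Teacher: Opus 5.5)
Your proposal is correct and follows the paper's proof essentially step for step. It uses the inclusion $\{H_k\geq c\Delta_k^2\}\cap\{\varepsilon_k\geq 0\}\subseteq\{A_k=1\}$, then the tower property through $\mathcal{F}_{k+1/2}$ to pull out the $\mathcal{F}_{k+1/2}$-measurable indicator and apply the first part of Assumption~\ref{a23}, and finally the bound \eqref{eq:hk}; your added remark that $\{k<T_\epsilon\}$ is $\mathcal{F}_k$-measurable is a correct clarification that the paper leaves implicit.
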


\begin{proof}
Note that $\{H_{k}\geq c\Delta_{k}^2\}\cap\{G_{k}-H_{k}\geq0\}\subseteq\{A_{k}=1\}$. Therefore, we have
\begin{align*}
P(A_k=1|&\mathcal{F}_{k})\geq P\left(\{H_{k}\geq c\Delta_{k}^2\}\cap\{G_{k}-H_{k}\geq0\}|\mathcal{F}_{k}\right)
\end{align*}
Using the tower property of conditional expectation and the definition of $\varepsilon$, from Assumption~\ref{a23}, we have
\begin{align*}
P(A_k=1|\mathcal{F}_{k})\geq&E[\mathbf{1}_{\{H_{k}\geq c\Delta_{k}^2\}}P\left(G_{k}-H_{k}\geq0|\mathcal{F}_{k+1/2}\right)|\mathcal{F}_{k}]\\
=&E[\mathbf{1}_{\{H_{k}\geq c\Delta_{k}^2\}}P\left(\varepsilon_{k}\geq0|\mathcal{F}_{k+1/2}\right)|\mathcal{F}_{k}]\\
\geq&E[\mathbf{1}_{\{H_{k}\geq c\Delta_{k}^2\}}p_{0}|\mathcal{F}_{k}]\\
=&p_{0}P\left(H_{k}\geq c\Delta_{k}^2|\mathcal{F}_{k}\right).
\end{align*}
Together with the bound~\eqref{eq:hk}, we have
\begin{align*}
P(A_k=1|\mathcal{F}_{k})\mathbf{1}_{\{\Delta_k\leq\delta_\epsilon\}}\geq&p_{0}P\left(H_{k}\geq c\Delta_{k}^2|\mathcal{F}_{k}\right)\mathbf{1}_{\{\Delta_k\leq\delta_\epsilon\}}\\
\geq&pp_{0}\mathbf{1}_{\{\Delta_k\leq\delta_\epsilon\}}.
\end{align*}

\end{proof}

The merit function $\Phi_k$ is defined similarly as in \eqref{merits}. The next Lemma establishes the coefficients and the expected decrease of $\Phi_k$ as follows.
\begin{lemma}\label{l4.2}
Let Assumption~\ref{a23} hold for  $\beta=\frac{1}{2}(\eta-c_{M-1})(1-\theta^2) > 0$. There exist $c_{M-1}>0$, $c_{i}=\frac{1-\gamma^{-2i}}{1-\gamma^{2-2M}}c_{M-1}$, $1\leq i\leq M-2$, $\eta>c_{M-1}$, and $\nu>0$ such that
\begin{align*}
E[\Phi_k-\Phi_{k+1}|\mathcal{F}_{k+1/2}]\geq\nu\Delta^2_k.
\end{align*}
\end{lemma}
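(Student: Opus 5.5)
The plan is to redo the case analysis of Lemma~\ref{l3.2}, replacing the exact acceptance test by its noisy version and then taking the conditional expectation given $\mathcal{F}_{k+1/2}$. Given $\mathcal{F}_{k+1/2}$, the quantities $X_k$, $\Delta_k$, the stored iterates and $X^d_k$ (hence $H_k$) are fixed, so the only remaining randomness is $\varepsilon_k$. Since $G_k=H_k+\varepsilon_k$, success means $H_k\geq c\Delta_k^2-\varepsilon_k$.

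\emph{Unsuccessful iterations.} The merit function evolves exactly as in the deterministic case: the stored points are unchanged and $\Delta_{k+1}=\theta\Delta_k$. The argument of Lemma~\ref{l3.2} then gives, with no noise involved,
\[
\Phi_k-\Phi_{k+1}\geq(\eta-c_{M-1})(1-\theta^2)\Delta_k^2 = 2\beta\Delta_k^2 .
\]

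\emph{Successful iterations.} I rerun Lemma~\ref{l11} with $c$ replaced by $c-\varepsilon_k/\Delta_k^2$. Inequality (\ref{l111}) becomes $B_k-f(X_k^d)\geq (c-c_{M-1})\Delta_k^2-\varepsilon_k$. The remaining entries of the minimum are $c_i\gamma^2-c_{i-1}$ and $c_1\gamma^2$. With the prescribed $c_i=\frac{1-\gamma^{-2i}}{1-\gamma^{2-2M}}c_{M-1}$, a direct computation shows that all of these equal the single constant
\[
a:=\frac{c_{M-1}(\gamma^2-1)}{1-\gamma^{2-2M}} .
\]
This is why the coefficients are chosen this way. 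Adding the stepsize term $\eta(1-\gamma^2)\Delta_k^2$ yields
\[
\Phi_k-\Phi_{k+1}\geq \bigl(\min(a,\,c-c_{M-1})-\eta(\gamma^2-1)\bigr)\Delta_k^2-\max(\varepsilon_k,0).
\]

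\emph{Choice of constants.} I will pick $c_{M-1}$ small enough that $c-c_{M-1}\geq a$, and pick $\eta$ in the open interval $\bigl(c_{M-1},\,c_{M-1}/(1-\gamma^{2-2M})\bigr)$. The interval is nonempty because $M>1$. Then $\nu_s:=a-\eta(\gamma^2-1)>0$, and $\beta=\frac12(\eta-c_{M-1})(1-\theta^2)>0$ as the statement requires.

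\emph{Combining the cases.} I write $\Phi_k-\Phi_{k+1}$ as $A_k$ times the success bound plus $(1-A_k)$ times the failure bound, giving
\[
\Phi_k-\Phi_{k+1}\geq \min(\nu_s,2\beta)\Delta_k^2-\max(\varepsilon_k,0).
\]
Taking $E[\cdot\mid\mathcal{F}_{k+1/2}]$, it remains to show that $E[\max(\varepsilon_k,0)\mid\mathcal{F}_{k+1/2}]$ is at most a fraction of $\Delta_k^2$ strictly smaller than $\min(\nu_s,2\beta)\Delta_k^2$. Then $\nu$ is the difference of the two coefficients.

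\emph{Main obstacle.} The difficulty is controlling the noise term using only Assumption~\ref{a23}. The tail bound $\beta\Delta_k^2/b$ is of Markov type: it is consistent with a first-moment bound $E[\max(\varepsilon_k,0)\mid\mathcal{F}_{k+1/2}]\leq\beta\Delta_k^2$, but it does not imply one, since the layer-cake integral $\int_0^\infty P(\varepsilon_k\geq b\mid\mathcal{F}_{k+1/2})\,db$ diverges at infinity. I would try first to read the assumption as exactly this first-moment control, which is what the sampling argument of Appendix~\ref{app:sampling} should deliver. Under that reading the tuning $\beta=\frac12(\eta-c_{M-1})(1-\theta^2)$ makes the noise cost exactly half of the failure decrease $2\beta\Delta_k^2$.

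To make it also smaller than $\nu_s$, I would shrink $\eta$ toward $c_{M-1}$; this reduces $\beta$ while $\nu_s$ tends to $a-c_{M-1}(\gamma^2-1)>0$. If only the tail form is available, the fallback is to split on $\{\varepsilon_k<b\Delta_k^2\}$ and its complement, using the tail bound on the bad event. That route would need an additional a priori bound on the merit increase on the bad event, such as boundedness of $f$ along the iterates.
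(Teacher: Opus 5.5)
Your argument is sound up to the pointwise bound $\Phi_k-\Phi_{k+1}\ge\min(\nu_s,2\beta)\Delta_k^2-\max(\varepsilon_k,0)$. The step after it cannot be completed under the stated hypothesis, and neither of your two exits proves the lemma as stated. As you note, Assumption~\ref{a23} gives no control on $E[\max(\varepsilon_k,0)\mid\mathcal{F}_{k+1/2}]$. A conditional law with $P(\varepsilon_k\ge b\mid\mathcal{F}_{k+1/2})=\min(1,\beta\Delta_k^2/b)$ for $b>0$ satisfies both parts of the assumption, yet its positive part has infinite expectation, so your lower bound degenerates to $-\infty$. Reading the assumption as a first-moment bound, or adding boundedness of $f$, changes the hypothesis.

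The loss is self-inflicted. On a successful iteration you replaced $H_k$ by $c\Delta_k^2-\varepsilon_k$, which imports the noise into the size of the merit change. But conditionally on $\mathcal{F}_{k+1/2}$, $\Phi_{k+1}$ takes only two values, one for success and one for failure, and both are $\mathcal{F}_{k+1/2}$-measurable (on success $X_{k+1}=X_k^d$ is already fixed). The noise only decides which value occurs, so only $P(A_k=1\mid\mathcal{F}_{k+1/2})$ should enter.

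The missing step is to keep the success bound in terms of $H_k$. With $a=c_{M-1}(\gamma^2-1)/(1-\gamma^{2-2M})$, set $S=\min(a\Delta_k^2,H_k-c_{M-1}\Delta_k^2)-\eta(\gamma^2-1)\Delta_k^2$ and $F=(\eta-c_{M-1})(1-\theta^2)\Delta_k^2=2\beta\Delta_k^2$. Then $E[\Phi_k-\Phi_{k+1}\mid\mathcal{F}_{k+1/2}]\ge F+(S-F)P(A_k=1\mid\mathcal{F}_{k+1/2})$.

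\emph{Case $H_k\ge(c_{M-1}+a)\Delta_k^2$.} The right-hand side is at least $\min(S,F)=\min(\nu_s,2\beta)\Delta_k^2$.

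\emph{Otherwise.} Tie $c$ to the constants by $c=c_{M-1}\theta^2+\eta(\gamma^2-\theta^2)$, a constraint absent from your choice. This makes $S-F=H_k-c\Delta_k^2$. Balancing $\nu_s=\beta$, as the paper does, also gives $c-(c_{M-1}+a)=\beta>0$, hence $b:=c\Delta_k^2-H_k>0$. Since $A_k=1$ exactly when $\varepsilon_k\ge b$, the tail bound at this single $\mathcal{F}_{k+1/2}$-measurable level gives $(S-F)P(A_k=1\mid\mathcal{F}_{k+1/2})\ge -b\cdot\beta\Delta_k^2/b=-\beta\Delta_k^2$. The conditional expected decrease is therefore at least $2\beta\Delta_k^2-\beta\Delta_k^2=\beta\Delta_k^2=\nu\Delta_k^2$.

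Only $b\,P(\varepsilon_k\ge b\mid\mathcal{F}_{k+1/2})\le\beta\Delta_k^2$ is used, which is exactly what a Markov-type tail supplies. No first moment and no bound on $f$ are needed.
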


\begin{proof}
Selecting $c_{M-1}>0$, $\eta>c_{M-1}$, and $\nu>0$ such that
\begin{align*}
\nu&=\frac{\gamma^2-1}{1-\gamma^{2-2M}}c_{M-1}+\eta(1-\gamma^2)=\frac{1}{2}(\eta-c_{M-1})(1-\theta^2)
\end{align*}
and
\begin{align}
c=c_{M-1}\theta^{2}+\eta\gamma^2-\eta\theta^2.\label{c}
\end{align}
This gives
\begin{align*}
c_{M-1}&=c\frac{(2\gamma^2-\theta^2-1)(1-\gamma^{2-2M})
}{\text{aux}}\\
\eta&=c\frac{
2\gamma^2-2+(1-\theta^2)(1-\gamma^{2-2M})
}{\text{aux}}\\
\nu&=c\frac{
(1-\theta^2)(\gamma^2-1)\gamma^{2-2M}
}{\text{aux}},
\end{align*}
where $\text{aux}=\theta^2(2\gamma^2-\theta^2-1)(1-\gamma^{2-2M})
+(\gamma^2-\theta^2)\left(2\gamma^2-2+(1-\theta^2)(1-\gamma^{2-2M})\right)$ is positive.

We will use similar arguments as in Lemma~\ref{l3.2}. In particular, we note again that if $x\leq y$, then $\max(a,x)-\max(a,y)\geq x-y$ and that $a-\max(b,c)=\min(a-b,a-c)$. Therefore, if iteration $k$ is not successful, then, from the mechanism of the algorithm,
\begin{align*}
\Phi_k-\Phi_{k+1}=&\max\left(f(X_{succ,k}^{M-1})-c_{M-1}\Delta_{k}^{2},\ldots,f(X_{succ,k}^{1})-c_{1}\Delta_{k}^{2},f(X_{k})\right)+\eta\Delta^2_{k}\\
&-\max\left(f(X_{succ,k}^{M-1})-c_{M-1}\theta^2\Delta_{k}^{2},\ldots,f(X_{succ,k}^{1})-c_{1}\theta^2\Delta_{k}^{2},f(X_{k})\right)-\eta\theta^2\Delta^2_{k}\\
\geq&\max\left(f(X_{succ,k}^{M-1})-c_{M-1}\Delta_{k}^{2},\ldots,f(X_{succ,k}^{1})-c_{1}\Delta_{k}^{2}\right)+\eta\Delta^2_{k}\\
&-\max\left(f(X_{succ,k}^{M-1})-c_{M-1}\theta^2\Delta_{k}^{2},\ldots,f(X_{succ,k}^{1})-c_{1}\theta^2\Delta_{k}^{2}\right)-\eta\theta^2\Delta^2_{k}\\
\geq&\min\left((\theta^2-1)c_{M-1},\ldots,(\theta^2-1)c_{1}\right)\Delta_k^2+\eta\Delta^2_{k}-\eta\theta^2\Delta^2_{k}\\
=&(\eta-c_{M-1})(1-\theta^2)\Delta_k^2.
\end{align*}
\allowdisplaybreaks
Also using the definition of $H_{k}$, the mechanism of the algorithm and $a-\max(b,c)=\min(a-b,a-c)$, if iteration $k$ is successful, then (note also that $c_i < c_{M-1}$)
\begin{align*}
&~\Phi_k-\Phi_{k+1}
\\=&\max\left(f(X_{succ,k}^{M-1})-c_{M-1}\Delta_{k}^{2},\ldots,f(X_{succ,k}^{1})-c_{1}\Delta_{k}^{2},f(X_{k})\right)+\eta\Delta^2_{k}\\
&-\max\left(f(X^{M-2}_{succ,k})-c_{M-1}\gamma^2\Delta_{k}^{2},\ldots,f(X_{k})-c_{1}\gamma^2\Delta_{k}^{2},f(X^{d}_{k})\right)-\eta\gamma^2\Delta^2_{k}\\
\geq&\min\left(c_{M-1}\gamma^2\Delta_k^2-c_{M-2}\Delta_k^2,\ldots,c_{2}\gamma^2\Delta_k^2-c_{1}\Delta_k^2,c_{1}\gamma^2\Delta_k^2,H_{k}-c_{M-1}\Delta_{k}^2\right)+\eta(1-\gamma^2)\Delta_k^2\\
=&\min\left(\frac{\gamma^2-1}{1-\gamma^{2-2M}}c_{M-1}\Delta_{k}^{2},H_{k}-c_{M-1}\Delta_{k}^2\right)+\eta(1-\gamma^2)\Delta_k^2.
\end{align*}

Based on the arguments above, one is able to bound below $\Phi_k-\Phi_{k+1}$, which is $\mathcal{F}_{k+1}$-measurable, by $\Delta_{k}$ and $H_{k}$, which are $\mathcal{F}_{k+1/2}$-measurable, when $A_{k}$ equals $0$ and $1$ separately. Therefore, one has
\begin{align*}
E[\Phi_k-&\Phi_{k+1}|\mathcal{F}_{k+1/2}]=E[(\Phi_k-\Phi_{k+1})\mathbf{1}(A_{k}=1)|\mathcal{F}_{k+1/2}]+E[(\Phi_k-\Phi_{k+1})\mathbf{1}(A_{k}=0)|\mathcal{F}_{k+1/2}]\\
\geq&\left(\min\left(\frac{\gamma^2-1}{1-\gamma^{2-2M}}c_{M-1}\Delta_{k}^{2},H_{k}-c_{M-1}\Delta_k^2\right)+\eta(1-\gamma^2)\Delta_k^2\right)P(A_{k}=1|\mathcal{F}_{k+1/2})+\\
&(\eta-c_{M-1})(1-\theta^2)\Delta_k^2P(A_{k}=0|\mathcal{F}_{k+1/2})\\
=&\left(\min\left(\frac{\gamma^2-1}{1-\gamma^{2-2M}}c_{M-1}\Delta_{k}^{2},H_{k}-c_{M-1}\Delta_k^2\right)+\eta(1-\gamma^2)\Delta_k^2\right)P(A_{k}=1|\mathcal{F}_{k+1/2})+\\
&(\eta-c_{M-1})(1-\theta^2)\Delta_k^2(1-P(A_{k}=1|\mathcal{F}_{k+1/2}))\\
=&\left(\min\left(\frac{\gamma^2-1}{1-\gamma^{2-2M}}c_{M-1}\Delta_{k}^{2}+c_{M-1}\Delta_k^2,H_{k}\right)+\eta(1-\gamma^2)\Delta_k^2-c_{M-1}\Delta_k^2\right)P(A_{k}=1|\mathcal{F}_{k+1/2})+\\
&(\eta-c_{M-1})(1-\theta^2)\Delta_k^2(1-P(A_{k}=1|\mathcal{F}_{k+1/2}))\\
=&\left(\min\left(\frac{\gamma^2-\gamma^{2-2M}}{1-\gamma^{2-2M}}c_{M-1}\Delta_{k}^{2},H_{k}\right)+(\eta\theta^2-\eta\gamma^2-c_{M-1}\theta^2)\Delta_k^2\right)P(A_{k}=1|\mathcal{F}_{k+1/2})+\\
&(\eta-c_{M-1})(1-\theta^2)\Delta_k^2.
\end{align*}

Note that $S^1_k=\{\omega:H_{k}\geq \frac{\gamma^2-\gamma^{2-2M}}{1-\gamma^{2-2M}}c_{M-1}\Delta_{k}^{2}\}$ and $S^2_k=\{\omega:H_{k}<\frac{\gamma^2-\gamma^{2-2M}}{1-\gamma^{2-2M}}c_{M-1}\Delta_{k}^{2}\}$ are two disjoint $\mathcal{F}_{k+1/2}$-measurable events such that $S^1_k\cup S^2_k=\Omega$. It suffices to prove $E[\Phi_k-\Phi_{k+1}|\mathcal{F}_{k+1/2}]\geq\nu\Delta^2_k$ for these two scenarios separately.

\textbf{Case 1.} First, we consider the case $S_k^1=\{\omega:H_{k}\geq \frac{\gamma^2-\gamma^{2-2M}}{1-\gamma^{2-2M}}c_{M-1}\Delta_{k}^{2}\}$. Then one has
\begin{align*}
E[\Phi_k-\Phi_{k+1}|\mathcal{F}_{k+1/2}]\mathbf{1}_{S_k^1}\geq&\left(\frac{\gamma^2-1}{1-\gamma^{2-2M}}c_{M-1}\Delta_{k}^{2}+\eta(1-\gamma^2)\Delta_k^2\right)P(A_{k}=1|\mathcal{F}_{k+1/2})\mathbf{1}_{S_k^1}+\\
&(\eta-c_{M-1})(1-\theta^2)\Delta_k^2P(A_{k}=0|\mathcal{F}_{k+1/2})\mathbf{1}_{S_k^1}\\
\geq&\min\left(\frac{\gamma^2-1}{1-\gamma^{2-2M}}c_{M-1}\Delta_{k}^{2}+\eta(1-\gamma^2)\Delta_k^2,(\eta-c_{M-1})(1-\theta^2)\Delta_k^2\right)\mathbf{1}_{S_k^1}\\
=&\nu\Delta_{k}^2\mathbf{1}_{S_k^1}.
\end{align*}

\textbf{Case 2.} Otherwise, we consider $S_k^2=\{\omega:H_{k}<\frac{\gamma^2-\gamma^{2-2M}}{1-\gamma^{2-2M}}c_{M-1}\Delta_{k}^{2}\}$.Then one has
\begin{align}
E[\Phi_k-\Phi_{k+1}|\mathcal{F}_{k+1/2}]\mathbf{1}_{S_k^2}\geq&\left(H_{k}+(\eta\theta^2-\eta\gamma^2-c_{M-1}\theta^2)\Delta_k^2\right)P(A_{k}=1|\mathcal{F}_{k+1/2})\mathbf{1}_{S_k^2}+\notag\\
&(\eta-c_{M-1})(1-\theta^2)\Delta_k^2\mathbf{1}_{S_k^2}.\label{case2}
\end{align}
Since $0<\theta<1<\gamma$, from the definition of~$c$ in~(\ref{c}), one can check that
\begin{align*}
H_{k}&<\frac{\gamma^2-\gamma^{2-2M}}{1-\gamma^{2-2M}}c_{M-1}\Delta_{k}^{2}<c\Delta_{k}^2.
\end{align*}
Then, from Assumption~\ref{a23}, the choice of $\beta=\frac{1}{2}(\eta-c_{M-1})(1-\theta^2)$, and~(\ref{c}),
\begin{align*}
P(A_{k}=1|\mathcal{F}_{k+1/2})\mathbf{1}_{S_k^2}&=P(G_{k}\geq c\Delta_{k}^2|\mathcal{F}_{k+1/2})\mathbf{1}_{S_k^2}\\
&=P(\varepsilon_{k}\geq c\Delta_{k}^2-H_{k}|\mathcal{F}_{k+1/2})\mathbf{1}_{S_k^2}\\
&\leq\frac{1}{2}\frac{(\eta-c_{M-1})(1-\theta^2)\Delta_k^2}{c\Delta_k^2-H_{k}}\mathbf{1}_{S_k^2}\\
&=-\frac{1}{2}\frac{(\eta-c_{M-1})(1-\theta^2)\Delta_k^2}{H_{k}+(\eta\theta^2-\eta\gamma^2-c_{M-1}\theta^2)\Delta_k^2}\mathbf{1}_{S_k^2},
\end{align*}
which gives, from~(\ref{case2}),
\begin{align*}
E[\Phi_k-\Phi_{k+1}|\mathcal{F}_{k+1/2}]\mathbf{1}_{S_k^2}\; \geq \; \frac{1}{2}(\eta-c_{M-1})(1-\theta^2)\Delta_k^2\mathbf{1}_{S_k^2}
\; = \; \nu\Delta_{k}^2\mathbf{1}_{S_k^2}.
\end{align*}
\end{proof}
Similar to Theorem~\ref{t3.9}, 
we apply the stopping time framework \cite{blanchet2019convergence,ding2025sequential}. In this case, Lemma~\ref{l4.1} shows the success probability $pp_0$ instead of $p$, and Lemma~\ref{l4.2} bounds the expected decrease of the merit function. We have the following Theorem. 

\begin{theorem}[expected number of iterations]
Let Let Assumptions~\ref{a21},~\ref{a22}, and~\ref{a11} hold and 
$\delta_\epsilon=2\kappa\epsilon/(\Lf+2c)$. In addition, let Assumption~\ref{a23} hold for  $\beta=\frac{1}{2}(\eta-c_{M-1})(1-\theta^2) > 0$ and suppose that $pp_0\ln\gamma+(1-pp_0)\ln\theta>0$. 
Then
\begin{align*}
    E[T_\epsilon-1]\leq \frac{\ln{\gamma}}{pp_{0}\ln{\gamma}+(1-pp_{0})\ln{\theta}}\cdot\frac{\Phi_0}{\nu\theta^2\delta_{\epsilon}^2} = \frac{\ln{\gamma}}{pp_{0}\ln{\gamma}+(1-pp_{0})\ln{\theta}}\cdot\frac{\Phi_0(\Lf+2c)^2}{4\nu\theta^2\kappa^2\epsilon^2},
\end{align*}
where the first bound follows from the stopping-time framework~\cite{blanchet2019convergence}.
Consequently, the expected iteration complexity of Algorithm~\ref{algo2} is $\mathcal{O}(\epsilon^{-2})$.
\end{theorem}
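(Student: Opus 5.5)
The plan is to apply the renewal–reward stopping-time framework of~\cite{blanchet2019convergence,ding2025sequential} exactly as in Theorem~\ref{t3.9}, after checking its two hypotheses for Algorithm~\ref{algo2}.

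\textbf{Condition (i): success probability at small steps.} For $k<T_\epsilon$ and $\Delta_k\le\delta_\epsilon$, the stepsize must be multiplied by $\gamma$ with conditional probability at least some $\tilde p$, and by $\theta$ otherwise. Lemma~\ref{l4.1} gives exactly this, with $\tilde p=pp_0$ and with conditioning on $\mathcal F_k$. The drift hypothesis of the framework becomes $\tilde p\ln\gamma+(1-\tilde p)\ln\theta>0$, which is assumed. The framework also compares $\ln\Delta_k$ with a random walk that restarts when $\Delta_k$ exceeds $\delta_\epsilon$. I must check that $\Delta_{k+1}\in\{\gamma\Delta_k,\theta\Delta_k\}$ deterministically, which holds by the algorithm. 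I also need the lower-bound inequality to be usable uniformly in the past; since it holds conditionally on $\mathcal F_k$, this is fine.

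\textbf{Condition (ii): merit decrease.} I need $E[\Phi_k-\Phi_{k+1}\mid\mathcal F_k]\ge\nu\Delta_k^2$ for $k<T_\epsilon$, where $\Phi_k\ge0$ is the merit function~\eqref{merits} built from true values $f$ and $\Phi_0=f(X_0)-f^*+\eta\Delta_0^2$. Lemma~\ref{l4.2} gives the bound conditionally on $\mathcal F_{k+1/2}\supseteq\mathcal F_k$, with the constants $c_i,\eta,\nu$ chosen there and $\beta=\tfrac12(\eta-c_{M-1})(1-\theta^2)$. Since $\Delta_k$ is $\mathcal F_k$-measurable, the tower property gives $E[\Phi_k-\Phi_{k+1}\mid\mathcal F_k]=E[E[\Phi_k-\Phi_{k+1}\mid\mathcal F_{k+1/2}]\mid\mathcal F_k]\ge\nu\Delta_k^2$. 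Integrability holds because, for $f$ bounded below, $\Phi_k$ is nonnegative, and finiteness propagates through the bounded-step recursion.

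\textbf{Combining.} Summing condition (ii) up to the stopping time $T_\epsilon\wedge K$ and using $\Phi\ge0$ with optional stopping gives
\[
E\Bigl[\sum_{k<T_\epsilon}\Delta_k^2\Bigr]\le\Phi_0/\nu .
\]
The renewal–reward argument then uses condition (i) to show that each iteration with $\Delta_k$ small is, in expectation, "paid for" by iterations with $\Delta_k\ge\theta\delta_\epsilon$. Specifically, the expected number of iterations is at most $\ln\gamma/(\tilde p\ln\gamma+(1-\tilde p)\ln\theta)$ times the expected number of iterations with $\Delta_k\ge\theta\delta_\epsilon$. The latter is bounded by $\Phi_0/(\nu\theta^2\delta_\epsilon^2)$. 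This yields the first inequality in the statement, with $T_\epsilon-1$ arising from the framework's indexing. Substituting $\delta_\epsilon=2\kappa\epsilon/(\Lf+2c)$ gives the second expression.

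\textbf{Main obstacle.} The hardest part is verifying that the framework, stated in~\cite{ding2025sequential} for conditioning on the natural filtration, accepts the stochastic success probability from Lemma~\ref{l4.1} together with the half-step filtration used in Lemma~\ref{l4.2}. The tower-property reduction above should resolve this. I would state the reduction explicitly and then cite the framework's bound verbatim with $p$ replaced by $pp_0$.
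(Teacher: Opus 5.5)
Your proposal is the paper's argument: you verify the two hypotheses of the stopping-time framework of \cite{blanchet2019convergence,ding2025sequential} with Lemma~\ref{l4.1} (success probability $pp_0$ when $\Delta_k\le\delta_\epsilon$, $k<T_\epsilon$) and Lemma~\ref{l4.2} (lifted from $\mathcal F_{k+1/2}$ to $\mathcal F_k$ by the tower property), then quote the framework's bound with $p$ replaced by $pp_0$. One caveat, which the paper shares: your renewal--reward step needs the initial stepsize to start at or above the threshold, e.g.\ $\Delta_0\ge\theta\delta_\epsilon$ (compare the hypothesis $\delta_\epsilon\le\delta_0$ in the complete-poll theorem), and you should add this to your check of the framework's hypotheses, since otherwise the iterations spent climbing from $\Delta_0$ to $\theta\delta_\epsilon$ are not paid for by $\Phi_0$ (indeed, if $\|\nabla f(X_0)\|>\epsilon$, the bound as stated stays bounded as $\Delta_0\to0$ while $T_\epsilon\to\infty$, because the iterates move at most $\Delta_0\sum_{j<k}\gamma^j$ in $k$ steps).
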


\section{Experiment results}

\subsection{Experimental setup}
In this section, we assess the numerical performance of max-$M$ non-monotone direct search on two sets of problems from the CUTEst collection~\cite{NGould_DOrban_PToint_2015}. The first set consists of 38 problems suggested by~\cite{giovannelli2024limitation}, with dimensions in $\{5,10,100\}$. 
These problems exhibit a range of structural properties, including non-linearity, non-convexity, and partial separability. The second set consists of 53 problems selected from~\cite{gratton2020decoupled,giovannelli2024limitation}, with dimensions ranging from 2 to 50. For each problem in this set, negative curvature was detected by running one of the second-order methods reported in~\cite{CPAvelino_JMoguerza_etal_2011}.
The names and corresponding dimensions of such problems are reported in Appendix~\ref{sec:app_exp}. 

We summarize the performance of all methods across the problems using performance profiles~\cite{more2009benchmarking}, which are briefly reviewed below. 
Let $\mathcal{S}$ be the set of solvers, $\mathcal{P}$ be the set of test problems, and $t_{p,s}>0$ the performance measure obtained by solver $s\in\mathcal{S}$ on problem $p\in\mathcal{P}$ (smaller $t_{p,s}$ is better). We define the performance ratio~$r_{p,s}$ and the performance profile~$\rho_s(\alpha)$ as follows 
\[
r_{p,s} ~=~ \frac{t_{p,s}}{\min\{t_{p,s} ~|~ s \in \mathcal{S}\}}, \quad \rho_s(\alpha) ~=~ \frac{1}{|\mathcal{P}|} \text{size}\{p \in \mathcal{P} ~|~ r_{p,s} \le \alpha\}.
\]
If a solver $s$ fails to satisfy the convergence test on a problem $p$, we set $ r_{p,s} = 2 \max_{p, s} r_{p,s}$. We plot~$\rho_s(\alpha)$ as a curve over~$\alpha$. The solver associated with the highest curve is the one with the best performance.  In particular, $\rho_s(1)$ is the fraction of problems for which solver $s$ performs the best, and the value  $\rho_s(\alpha)$ for large $\alpha$ represents the fraction of problems  solved by solver $s$. These quantities provide measures of relative efficiency and robustness, respectively.

For all methods, we set $\gamma = 2$, $\theta = 0.5$, and $c = 1$. We compare different choices of $M$ in  max-$M$ non-monotone direct search where $M$ takes values in $\{2,5,10,20\}$ and impose a budget of 1000 function evaluations per run. For every solver-problem pair, we 
perform $10$ runs using random seeds $0,1,\dots,9$ and report the average objective-value. In all the experiments, we sample a direction $d$ uniformly from the unit sphere. If this direction does not yield a successful point, we proceed with the the opposite direction $-d$ before generating a new random direction. Since $M=2$ provides the strongest overall performance among the tested values, we use this choice when comparing the max-$M$ non-monotone to monotone methods.


\subsection{Deterministic results}


In this section, we present the results for the case where function values are deterministic. The left panel of Figure \ref{fig:ds} displays the performance profiles comparing the max-$M$ non-monotone direct search (Algorithm~\ref{algo1}) for different values of $M \in \{2, 5, 10, 20\}$. Across both problem sets, the max-$2$ variant clearly dominates the others in terms of both efficiency and robustness. While our empirical and theoretical results confirm that the max-$M$ method converges for different values of $M$, its performance deteriorates as the memory size $M$ increases. This observation is not surprising as historical function values are generally high, which leads to more acceptances and more steps to find good function decrease. 

The right panel of Figure \ref{fig:ds} compares the max-$2$ non-monotone method against standard monotone direct search. For Set 1, the monotone method demonstrates better efficiency, while the max-$2$ non-monotone algorithm is more robust. For Set 2, the max-$2$ non-monotone method shows competitive efficiency and achieves a better robustness compared to the monotone approach. Hence, non-monotone method yields good performance overall, especially when the problem has negative curvature as in Set 2. 

\begin{figure}[H]
    \centering
\includegraphics[width=0.48\linewidth]{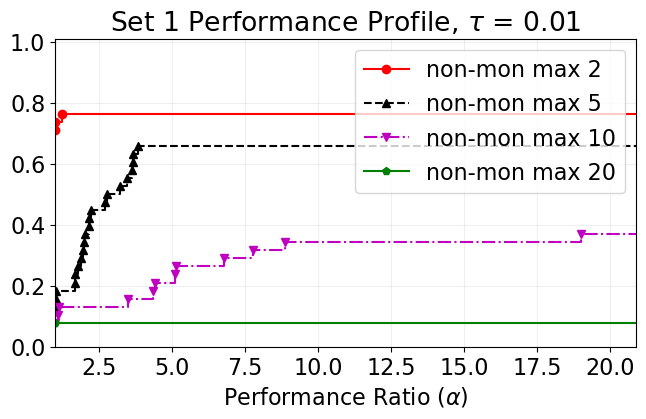}
\includegraphics[width=0.48\linewidth]{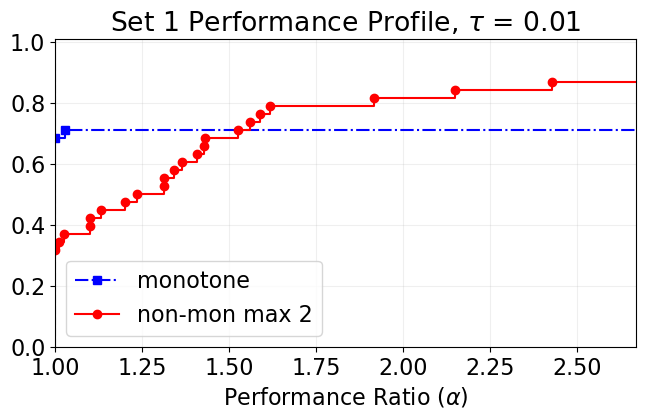}
\includegraphics[width=0.48\linewidth]{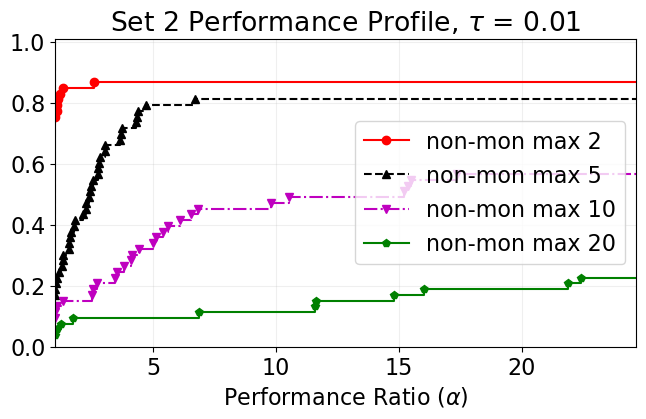}
\includegraphics[width=0.48\linewidth]{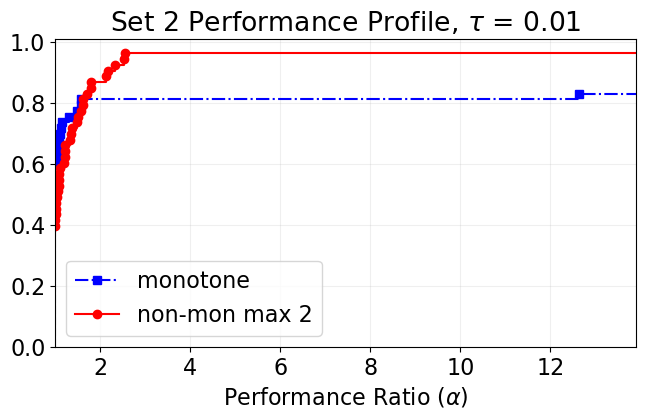}
    \caption{Performance profiles for two problem sets in the deterministic case.}
    \label{fig:ds}
\end{figure}
\subsection{Stochastic results}

For the stochastic experiments, we perturb the objective values with Gaussian noise and choose the 
number of samples of order $\mathcal{O} (\delta_k^4)$. 
We implement Algorithm~\ref{algo2} where the stochastic function values at prior successful iterates are resampled independently. As a result, in this theoretical scenario, the max-$M$ method requires $M+1$ stochastic function estimates per iteration, which is higher than the two estimates for the monotone method. 
Alternatively, we implement a practical scenario, where only the stochastic function value at the candidate point $X_k^d$ is resampled in each iteration. Hence, the per-iteration costs of max-$M$ non-monotone and monotone methods are the same (one stochastic evaluation), while the random estimates used in the acceptance test are no longer independent.

In the theoretical scenario (Figure \ref{fig:sdsr}), where all values are resampled, the monotone direct search outperforms the max-$2$ non-monotone one on Set 1, while on Set 2, max-$2$ non-monotone is better. This is consistent with the observation in the deterministic case that max-$2$ non-monotone offers significant improvement when the problems have negative curvature. 
In the practical scenario (Figure \ref{fig:sds}), where only the new point is sampled and the per-iteration cost is the same for both methods, the performance of max-$2$ non-monotone algorithm is better than the monotone one in both initial efficiency and eventual robustness.

A natural question arises whether the non-monotone condition should use the most recent function values from all iterations or only those associated with successful iterates. Our experiments indicate that the two choices yield broadly comparable performance for the two problem sets in both deterministic and stochastic settings, therefore, we omit the results.

\begin{figure}[H]
    \centering
\includegraphics[width=0.48\linewidth]{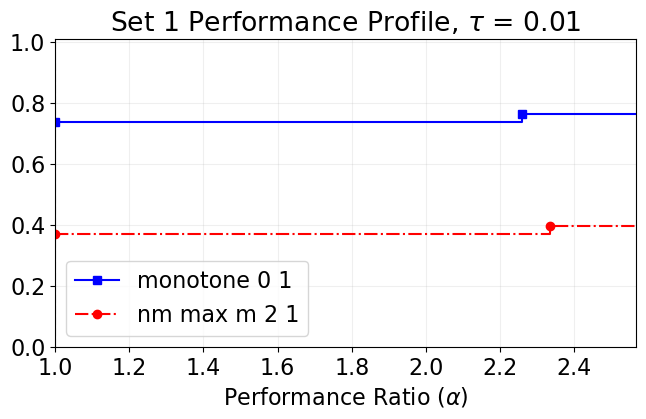}
\includegraphics[width=0.48\linewidth]{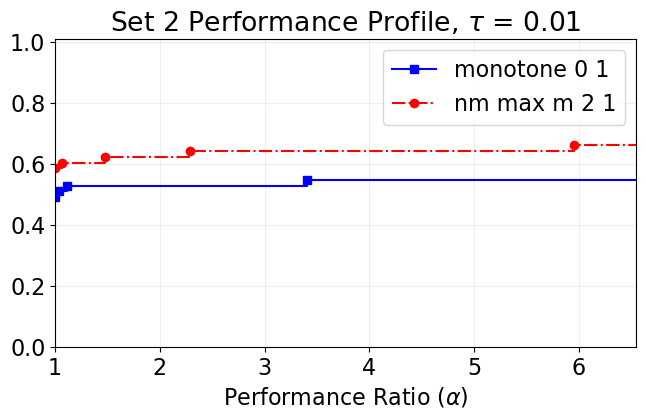}
    \caption{Performance profiles for two problem sets in the stochastic case, theoretical scenario: all stochastic function values are resampled.}
    \label{fig:sdsr}
\end{figure}

\begin{figure}[H]
    \centering
\includegraphics[width=0.48\linewidth]{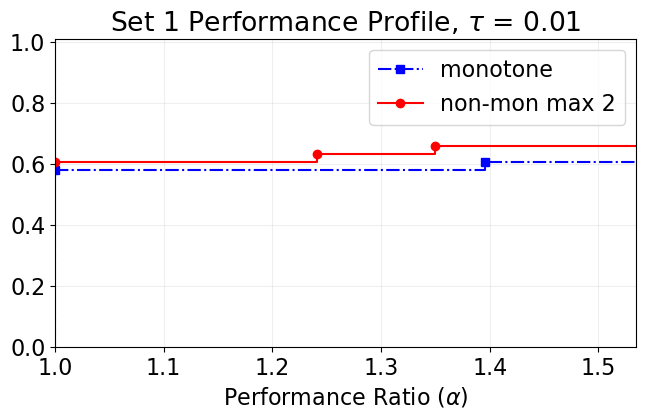}
\includegraphics[width=0.48\linewidth]{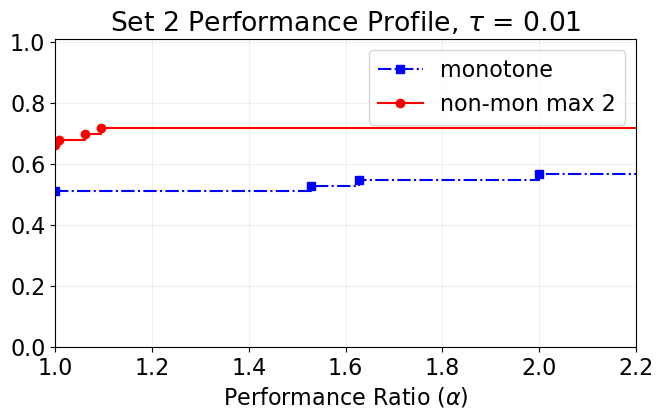}
    \caption{Performance profiles for two problem sets in stochastic case, practical scenario: only sample the value of stochastic function at $X_k^d$.}
    \label{fig:sds}
\end{figure}


Overall, the stochastic results indicate that the max-$2$ non-monotone method performs consistently well on problems with negative curvature, but resampling all historical function values can offset its benefits when stochastic evaluations are expensive.




\section{Conclusions and future work}

We studied direct-search methods in which a trial point is compared with the largest objective value among the $M$ most recent distinct iterates, rather than with the current value alone. This acceptance rule permits temporary objective increases while retaining a quantified decrease relative to the recent function values. We developed complexity analyses for three settings: deterministic complete polling, deterministic probabilistic descent, and probabilistic descent with stochastic function estimates. Together, these results provide a comprehensive complexity theory for non-monotone derivative-free optimization.
We also conducted thorough experiments to compare max-$M$ non-monotone direct search with the monotone method. Our experiment offers insights into the advantages of the practical choice $M=2$ and the option of using historical values instead of resampling. 

While both max-$M$ non-monotone and monotone direct search converge and have competitive empirical performance, 
future work could identify the problem geometries (such as suitable classes of curved valleys) for which non-monotone acceptance has a provable advantage. A non-monotone technique using the averages of objective values at recent iterates could also be an interesting topic. 

On the other hand, we expect that merit function design developed in this paper can be adapted to establish complexity bounds for line-search and trust-region methods. In a line-search framework, the usual monotone sufficient-decrease condition could be replaced by one based on the maximum objective value over a finite window of recent iterates. The analysis would then combine probabilistically accurate function and gradient estimates with a Lyapunov function that accounts for all the recent objective values. For trust-region methods, the actual reduction in the acceptance ratio could analogously be measured from the largest recent objective value. Developing such line-search and trust-region extensions, and determining whether non-monotonicity yields similar practical gains in those settings, are promising directions for future work.


\newpage
\bibliographystyle{siam}
\bibliography{reference.bib}

\begin{appendices}
\section*{Appendix }

\section{An alternative merit function for Algorithm~\ref{algo1}}\label{app:merit}
This appendix presents the alternative merit function $\Psi_k$ mentioned in Section~\ref{sec:merit_functions}. The first lemma below shows that the maximum function values of $M$ successful iterates is monotonically increasing when the index is shifted to more recent iterates. This property is helpful later to analyze the merit function. 
\begin{lemma}[Monotonicity]\label{AMon}
In Algorithm~\ref{algo1}, for each $j\geq0$, we have
\begin{align*}
\max\left(f(X_{succ,k}^{j+M-1}),\ldots,f(X_{succ,k}^{j+1}),f(X_{succ,k}^{j})\right)\leq\max\left(f(X_{succ,k}^{j+M}),\ldots,f(X_{succ,k}^{j+2}),f(X_{succ,k}^{j+1})\right).
\end{align*}
\end{lemma}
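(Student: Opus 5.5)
The inequality compares two windows of $M$ consecutive successful iterates, the more recent one (indices $j,\dots,j+M-1$) and the one shifted back by one (indices $j+1,\dots,j+M$). The two windows share the $M-1$ points $X_{succ,k}^{j+1},\dots,X_{succ,k}^{j+M-1}$. The left side therefore exceeds the maximum of these shared values only through the single extra term $f(X_{succ,k}^{j})$. So it suffices to show
\begin{align*}
f(X_{succ,k}^{j})\leq\max\left(f(X_{succ,k}^{j+M}),\ldots,f(X_{succ,k}^{j+1})\right),
\end{align*}
since then both sides are maxima over sets of values in which the left set is dominated by the right.

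To prove this, I split into cases according to whether $X_{succ,k}^{j}$ is a genuine successful iterate or the default $X_0$.

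\textbf{Case 1: $X_{succ,k}^{j}$ was produced by an actual successful iteration.} Suppose it was produced at iteration $t$, so $X_{succ,k}^{j}=X_{t+1}=X_t+\Delta_tD_t$. First I identify the quantities at iteration $t$: $X_t=X_{succ,k}^{j+1}$, and $X_{succ,t}^{i}=X_{succ,k}^{j+1+i}$ for $i=1,\dots,M-1$. This holds because no success occurs between the success producing $X_{succ,k}^{j+1}$ and iteration $t$. For $j=0$ with $k$ itself unsuccessful-so-far, the same reasoning applies with $X_{succ,k}^{0}=X_k$. The acceptance condition (\ref{sdcd}) at iteration $t$ then gives
\begin{align*}
f(X_{succ,k}^{j})\leq\max\left(f(X_{succ,k}^{j+M}),\ldots,f(X_{succ,k}^{j+1})\right)-c\Delta_t^2,
\end{align*}
which is stronger than needed.

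\textbf{Case 2: $X_{succ,k}^{j}=X_0$.} Here the window contains fewer than $j$ real successes. By the convention of the algorithm, every later index $j+1,\dots,j+M$ also equals $X_0$, so both sides equal $f(X_0)$ (or the right side contains $f(X_0)$), and the inequality is trivial.

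The main obstacle is the bookkeeping in Case 1. I must verify carefully that the memory window used in the test at iteration $t$ is exactly the window $j+1,\dots,j+M$ as seen from $k$, including when some of those entries fall back to the default $X_0$. This needs an index-shift identity: between consecutive successes the list of successful iterates is unchanged, and each success shifts it by one. I would state this identity first as a small observation and then apply it.
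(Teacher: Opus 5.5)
Your proposal is correct and follows essentially the same route as the paper: reduce to bounding the single extra term $f(X_{succ,k}^{j})$ by the maximum of the shifted window, get that bound from the acceptance test~(\ref{sdcd}) at the iteration that produced $X_{succ,k}^{j}$, and handle the case with too few successes trivially, since there every entry equals $X_0$. The index-shift bookkeeping you plan to make explicit is left implicit in the paper; one small slip is that your Case 2 corresponds to at most $j$ successes before iteration $k$, not fewer than $j$, though this does not affect the argument.
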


\begin{proof}
Fix any integer $j\geq 0$. We consider two cases.

First, suppose that there are at least $j+1$ successful iterations from
iteration $0$ through iteration $k$. Then the $(j+1)$-st successful
iteration in reverse order exists, and the successful-iteration condition
gives
\begin{align}
f(X_{\mathrm{succ},k}^{j})
&\leq
\max_{1\leq i\leq M}
f(X_{\mathrm{succ},k}^{j+i})
-c\left(\Delta_{\mathrm{succ},k}^{j+1}\right)^2 \leq
\max_{1\leq i\leq M}
f(X_{\mathrm{succ},k}^{j+i}).
\label{la11}
\end{align}
Moreover,
\begin{align}
\max_{1\leq i\leq M-1}
f(X_{\mathrm{succ},k}^{j+i})
\leq
\max_{1\leq i\leq M}
f(X_{\mathrm{succ},k}^{j+i}).
\label{la12}
\end{align}
Combining \eqref{la11} and \eqref{la12}, we obtain the desired result.

Now suppose that there are at most $j$ successful iterations from
iteration $0$ through iteration $k$. By the convention used to define
the reverse sequence beyond the available successful iterations,
\[
X_{\mathrm{succ},k}^{j+i}
=
X_{0},
\qquad i=1,\ldots,M.
\]
Consequently,
\[
\max_{0\leq i\leq M-1}
f(X_{\mathrm{succ},k}^{j+i})
=
f(X_{0})
=
\max_{1\leq i\leq M}
f(X_{\mathrm{succ},k}^{j+i}).
\]
\end{proof}

In the second lemma below, we show a lower bound for the smallest stepsize among the previous $M$ successful iterations in term of the current stepsize. 

\begin{lemma}\label{lemma2.7}
In Algorithm~\ref{algo1}, we have
\begin{align*}
\min_{0\leq i\leq M-1}\left\{\left(\Delta_{succ,k}^{i}\right)^2\right\}\geq\frac{\Delta_{k}^{2}}{\gamma^{2M-2}}.
\end{align*}
\end{lemma}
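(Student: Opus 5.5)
The plan is a direct counting argument on the stepsize update. Between the iteration at which a stored iterate was recorded and the current iteration $k$, the stepsize is multiplied by $\gamma$ at most $M-1$ times; every other factor is $\theta<1$. I will bound $\Delta_k/\Delta_{succ,k}^{i}$ separately for each $i\in\{0,\ldots,M-1\}$ and then take the minimum over $i$.

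First I fix the convention for $\Delta_{succ,k}^{i}$ with $i\geq1$. Let $s_i<k$ be the index of the $i$-th latest successful iteration, so that $X_{succ,k}^{i}=X_{s_i+1}$. I would take $\Delta_{succ,k}^{i}$ to be the stepsize $\Delta_{s_i}$ used in that successful poll, since that is the one appearing in the acceptance condition at iteration $s_i$.

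\textbf{Main case.} Suppose at least $i$ successful iterations occur before $k$. By the update rule,
\[
\Delta_k=\Delta_{s_i}\,\gamma^{a}\theta^{b},
\]
where $a$ is the number of successful iterations among $s_i,\ldots,k-1$ and $b\geq0$ is the number of unsuccessful ones. By definition of $s_i$, exactly $i$ successes occur in this window ($s_i$ itself and the $i-1$ later ones). Hence $a=i$, and since $\theta<1$,
\[
\Delta_k\leq\gamma^{i}\Delta_{succ,k}^{i}\leq\gamma^{M-1}\Delta_{succ,k}^{i}\qquad(1\leq i\leq M-1).
\]
If the convention is instead the post-success stepsize $\Delta_{s_i+1}=\gamma\Delta_{s_i}$, the same count gives $a=i-1$ and the bound only improves. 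So either reading yields the exponent $M-1$.

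\textbf{Boundary cases.} For $i=0$ we have $\Delta_{succ,k}^{0}=\Delta_k$, so the claim is trivial because $\gamma>1$. If fewer than $i$ successful iterations have occurred, then $X_{succ,k}^{i}=X_0$ and $\Delta_{succ,k}^{i}=\Delta_0$. In that case $\Delta_k=\Delta_0\gamma^{N_{succ,k}}\theta^{k-N_{succ,k}}$ with $N_{succ,k}\leq i-1\leq M-2$, so again $\Delta_k\leq\gamma^{M-1}\Delta_0$.

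Squaring each of these bounds and taking the minimum over $0\leq i\leq M-1$ gives $\min_i(\Delta_{succ,k}^{i})^2\geq\Delta_k^2/\gamma^{2M-2}$. The only delicate point is the bookkeeping: checking that the number of $\gamma$-factors in the window is at most $M-1$ under the chosen convention for $\Delta_{succ,k}^{i}$, and handling the $X_0$-padding convention correctly. Everything else is immediate from the multiplicative update rule of Algorithm~\ref{algo1}.
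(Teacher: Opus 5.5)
Your proof is correct and is the same counting argument the paper uses: between the stored successful stepsize and iteration $k$ there are at most $M-1$ expansions by $\gamma$, and every other factor is $\theta<1$. Your separate handling of $i=0$ and of the $X_0$-padding case just makes explicit what the paper's one-paragraph proof leaves implicit.

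One harmless slip: in the paper's convention $X_{succ,k}^{i}$ is the point $X_{s_i}$ from which the successful poll at iteration $s_i$ was made, not $X_{s_i+1}$ (otherwise $X_{succ,k}^{1}$ would coincide with $X_k$). Since you attach $\Delta_{s_i}$ to index $i$, which is exactly the paper's $\Delta_{succ,k}^{i}$, the bound is unaffected.
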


\begin{proof}
There are at most $M-1$ successes between the past $M-1$ successful iterations in reverse order at iteration $k$ and the $k$-th iteration. Recall that if the $i$-th success in reverse order at iteration $k$ does not exist, then $\Delta_{succ,k}^{i}=\Delta_{succ,k}^{i-1}$. Therefore, from the stepsize mechanism of Algorithm~\ref{algo1}, we have for each $0\leq i\leq M-1$
\begin{align*}
\Delta_{k}^{2}\leq\gamma^{2M-2}\left(\Delta_{succ,k}^{i}\right)^2.
\end{align*}
Therefore, we have
\begin{align*}
\Delta_{k}^{2}\leq\gamma^{2M-2}\min_{0\leq i\leq M-1}\left\{\left(\Delta_{succ,k}^{i}\right)^2\right\},
\end{align*}
which proves the result after rearranging.

\end{proof}
Let us denote the following merit function for Algorithm~\ref{algo1}: 
\begin{align*}
\Psi_{k}= &\sum_{i=0}^{M-1}\max\left(f(X_{succ,k}^{i+M-1}),\ldots,f(X_{succ,k}^{i+1}),f(X_{succ,k}^{i})\right)- M f^{*}+\eta\Delta_{k}^2 \\
& = \psi_k - M f^{*}+\eta\Delta_{k}^2, \notag
\end{align*}
where $\eta>0$ is a constant. While $\Psi_{k}$ does not need to include the correcting parameters $c_i$, $i= 1, \dots, M-1$ as in $\Phi_{k}$, it instead has the sum of $M$ prior maximum function values terms. As a result, the decrease of $\Psi_{k}$ depends on $M$ sufficient decrease conditions, as demonstrated in the following lemma. 
\begin{lemma}\label{lemma2.8}
 In Algorithm~\ref{algo1}, the $M-1$ last satisfactions of the sufficient decrease condition and the satisfaction of the current sufficient decrease condition~(\ref{sdcd}) at iteration $k$ imply that
\begin{align*}
\psi_{k}-\psi_{k+1}\geq c\min_{0\leq i\leq M-1}\left\{\left(\Delta_{succ,k}^{i}\right)^2\right\}.
\end{align*}
\end{lemma}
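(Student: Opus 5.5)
Throughout, I read the hypothesis as follows: iteration $k$ is successful, and at least $M-1$ successful iterations precede it, so that $X_{succ,k}^{1},\ldots,X_{succ,k}^{M-1}$ are genuine successful iterates rather than the default $X_0$. If fewer successes preceded iteration $k$, some of these slots equal $X_0$. A slot equal to $X_0$ can enter the new window with no decrease attached to it, so this reading of ``the $M-1$ last satisfactions'' is what the argument needs.

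The plan is a telescoping of the window maxima. Write
\[
m_j^{(k)}=\max\left(f(X_{succ,k}^{j+M-1}),\ldots,f(X_{succ,k}^{j})\right),
\]
so that $\psi_k=\sum_{i=0}^{M-1}m_i^{(k)}$. Since iteration $k$ is successful, $X_{k+1}=X_k+\Delta_kD_k$ and the reverse list of successful iterates shifts by one. That is, $X_{succ,k+1}^{0}=X_{k+1}$ and $X_{succ,k+1}^{i}=X_{succ,k}^{i-1}$ for $i\geq1$. Hence $m_i^{(k+1)}=m_{i-1}^{(k)}$ for $1\leq i\leq M-1$, and the sum collapses to
\[
\psi_k-\psi_{k+1}=m_{M-1}^{(k)}-m_0^{(k+1)}.
\]
Here $m_0^{(k+1)}=\max\left(f(X_{k+1}),f(X_{succ,k}^{0}),\ldots,f(X_{succ,k}^{M-2})\right)$. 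It therefore suffices to show that each of these $M$ values is at most $m_{M-1}^{(k)}-c\min_{0\leq i\leq M-1}(\Delta_{succ,k}^{i})^2$.

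First I handle the new point. The current acceptance test~(\ref{sdcd}) gives $f(X_{k+1})\leq m_0^{(k)}-c\Delta_k^2$, and $\Delta_k=\Delta_{succ,k}^{0}$.

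Next I handle the old points. For $0\leq j\leq M-2$, the point $X_{succ,k}^{j}$ was accepted at an earlier successful iteration. That acceptance, used exactly as in~(\ref{la11}) of Lemma~\ref{AMon}, gives
\[
f(X_{succ,k}^{j})\leq m_{j+1}^{(k)}-c\left(\Delta_{succ,k}^{j+1}\right)^2 .
\]
The indices $j+1$ range over $1,\ldots,M-1$, so every stepsize that appears lies in the set over which the minimum is taken.

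Finally, I iterate Lemma~\ref{AMon} to get the chain $m_0^{(k)}\leq m_1^{(k)}\leq\cdots\leq m_{M-1}^{(k)}$. This replaces every $m_{j}^{(k)}$ with $0\leq j\leq M-1$ above by $m_{M-1}^{(k)}$, and it also applies to the bound for the new point. Taking the maximum of the $M$ bounds yields
\[
m_0^{(k+1)}\leq m_{M-1}^{(k)}-c\min_{0\leq i\leq M-1}\left(\Delta_{succ,k}^{i}\right)^2,
\]
which is the claim.

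The main obstacle is the index bookkeeping. I must check that the shift identity holds exactly under the convention for $X_{succ,k}^{i}$. I must also check that the success producing $X_{succ,k}^{j}$ used stepsize $\Delta_{succ,k}^{j+1}$ and compared against precisely the window defining $m_{j+1}^{(k)}$, as in~(\ref{la11}). Finally, the monotonicity chain must only be invoked for indices up to $M-1$, so that no window beyond those appearing in $\psi_k$ is needed.
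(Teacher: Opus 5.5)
Your proposal is correct and follows the paper's proof essentially step for step. The paper uses the same telescoping identity $\psi_k-\psi_{k+1}=m_{M-1}^{(k)}-m_0^{(k+1)}$, where its $B_k$ is your $m_{M-1}^{(k)}$. It bounds each entry of the new window by that entry's own acceptance test, with the current test~(\ref{sdcd}) for $X_{k+1}$, and it uses the chain from Lemma~\ref{AMon} to raise every window maximum to $m_{M-1}^{(k)}$.

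Your reading of the hypothesis as at least $M-1$ prior successes is also what the paper relies on, since it imposes $N_{succ,k}\geq M-1$ when it applies this lemma. You are right that without that assumption the claimed decrease can fail.
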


\begin{proof}
For $0\leq j\leq M-2$, because of the $j+1$-th satisfied sufficient decrease condition in reverse order at iteration $k$, it follows that
\begin{align*}
\max\left(f(X_{succ,k}^{j+M}),\ldots,f(X_{succ,k}^{j+2}),f(X_{succ,k}^{j+1})\right)-f(X_{succ,k}^{j})\geq c\left(\Delta_{succ,k}^{j+1}\right)^2.
\end{align*}
For ease of notation, let us denote that $B_{k}=\max\left(f(X_{succ,k}^{2M-2}),\ldots,f(X_{succ,k}^{M-1})\right)$. Together with Lemma~\ref{AMon}, one has that for each $0\leq j\leq M-2$
\begin{align}
&B_{k}-f(X_{succ,k}^{j}) \notag \\
\geq&\max\left(f(X_{succ,k}^{j+M}),\ldots,f(X_{succ,k}^{j+2}),f(X_{succ,k}^{j+1})\right)-f(X_{succ,k}^{j})\geq c\left(\Delta_{succ,k}^{j+1}\right)^2. \label{cond1-app}
\end{align}

The satisfaction of the current sufficient decrease condition gives
\begin{align*}
\max\left(f(X_{succ,k}^{M-1}),\ldots,f(X_{succ,k}^{1}),f(X_{succ,k}^{0})\right)-f(X_{succ,k+1}^{0})\geq c\left(\Delta_{succ,k}^{0}\right)^2. 
\end{align*}
Then Lemma~\ref{AMon} implies that
\begin{align}
&B_{k}-f(X_{succ,k+1}^{0}) \notag \\ 
\geq&\max\left(f(X_{succ,k}^{M-1}),\ldots,f(X_{succ,k}^{1}),f(X_{succ,k}^{0})\right)-f(X_{succ,k+1}^{0})\geq c\left(\Delta_{succ,k}^{0}\right)^2. \label{cond2-app}
\end{align}

From (\ref{cond1-app}) and (\ref{cond2-app}), we obtain
\begin{align*}
\psi_{k}-\psi_{k+1}=&\sum_{i=0}^{M-1}\max\left(f(X_{succ,k}^{i+M-1}),\ldots,f(X_{succ,k}^{i})\right)-\sum_{i=0}^{M-1}\max\left(f(X_{succ,k+1}^{i+M-1}),\ldots,f(X_{succ,k+1}^{i})\right)\\
=&\sum_{i=0}^{M-1}\max\left(f(X_{succ,k}^{i+M-1}),\ldots,f(X_{succ,k}^{i})\right)-\sum_{i=1}^{M-1}\max\left(f(X_{succ,k}^{i+M-2}),\ldots,f(X_{succ,k}^{i-1})\right)\\
&-\max\left(f(X_{succ,k}^{M-2}),\ldots,f(X_{succ,k}^{0}),f(X_{succ,k+1}^{0})\right)\\
=&B_{k}-\max\left(f(X_{succ,k}^{M-2}),\ldots,f(X_{succ,k}^{0}),f(X_{succ,k+1}^{0})\right)\\
=&\min\left(B_{k}-f(X_{succ,k}^{M-2}),\ldots,B_{k}-f(X_{succ,k}^{0}),B_{k}-f(X_{succ,k+1}^{0})\right)\\
\geq&c\min_{0\leq i\leq M-1}\left\{\left(\Delta_{succ,k}^{i}\right)^2\right\}.
\end{align*}
\end{proof}


The final lemma specifies the parameter $\eta$ and shows a deterministic decrease property for the merit function as follows. 
\begin{lemma}
Let $N_{succ,k}$ denote the number of successes before iteration $k$. Suppose that $N_{succ,k}\geq M-1$. Let $\eta=\frac{c}{\gamma^{2M-2}(\gamma^2-\theta^2)}$. There exists $\nu=\frac{c(1-\theta^2)}{\gamma^{2M-2}(\gamma^2-\theta^2)}$ such that, in Algorithm~\ref{algo1}, we have
\begin{align*}
\Psi_{k}-\Psi_{k+1}\geq \nu\Delta_{k}^{2}.
\end{align*}
\end{lemma}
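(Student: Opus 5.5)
I will split into the successful and unsuccessful cases, as in Lemma~\ref{l3.2}, and show that the chosen $\eta$ makes both cases yield the same constant $\nu$.

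\textbf{Unsuccessful iteration.} The set of successful iterates does not change, so $\psi_{k+1}=\psi_k$. Also $\Delta_{k+1}=\theta\Delta_k$. Hence
\begin{align*}
\Psi_k-\Psi_{k+1}=\eta(1-\theta^2)\Delta_k^2=\frac{c(1-\theta^2)}{\gamma^{2M-2}(\gamma^2-\theta^2)}\Delta_k^2=\nu\Delta_k^2 .
\end{align*}
The hypothesis $N_{succ,k}\geq M-1$ plays no role here.

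\textbf{Successful iteration.} Now $\Delta_{k+1}=\gamma\Delta_k$, so
\begin{align*}
\Psi_k-\Psi_{k+1}=(\psi_k-\psi_{k+1})+\eta(1-\gamma^2)\Delta_k^2 .
\end{align*}
I invoke Lemma~\ref{lemma2.8}. The assumption $N_{succ,k}\geq M-1$ guarantees that the $M-1$ previous sufficient decrease conditions it uses were actually satisfied at genuine successful iterations, so the lemma applies. It gives
\begin{align*}
\psi_k-\psi_{k+1}\geq c\min_{0\leq i\leq M-1}\left\{\left(\Delta_{succ,k}^i\right)^2\right\}.
\end{align*}
Lemma~\ref{lemma2.7} bounds this minimum below by $\Delta_k^2/\gamma^{2M-2}$. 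Therefore
\begin{align*}
\Psi_k-\Psi_{k+1}\geq\left(\frac{c}{\gamma^{2M-2}}-\eta(\gamma^2-1)\right)\Delta_k^2 .
\end{align*}
Substituting $\eta$, the coefficient equals
\begin{align*}
\frac{c}{\gamma^{2M-2}}\left(1-\frac{\gamma^2-1}{\gamma^2-\theta^2}\right)=\frac{c(1-\theta^2)}{\gamma^{2M-2}(\gamma^2-\theta^2)}=\nu .
\end{align*}
So both cases give exactly $\nu\Delta_k^2$, and $\eta$ is precisely the value that equates them.

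\textbf{Main obstacle.} The delicate point is the successful case: I must confirm that Lemma~\ref{lemma2.8} legitimately applies under $N_{succ,k}\geq M-1$, i.e., that the successful iterates it indexes are real rather than placeholder copies of $X_0$. I also need the stepsize in Lemma~\ref{lemma2.8} that accompanies the current success to be $\Delta_{succ,k}^0=\Delta_k$; this holds by the paper's notation. If placeholder indices do occur, the case analysis in Lemma~\ref{AMon} already handles them, since those terms equal $f(X_0)$. Everything else is algebra.
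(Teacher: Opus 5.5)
Your proof is correct and follows the paper's argument essentially verbatim. In the unsuccessful case, $\psi_{k+1}=\psi_k$ gives exactly $\eta(1-\theta^2)\Delta_k^2=\nu\Delta_k^2$; in the successful case, you chain Lemma~\ref{lemma2.8} (using $N_{succ,k}\geq M-1$ so that the $M-1$ earlier successes exist) with Lemma~\ref{lemma2.7} to obtain the coefficient $c\gamma^{2-2M}-\eta(\gamma^2-1)=\nu$, with $\eta$ chosen precisely to make the two cases coincide.
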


\begin{proof}
Let $A_{k}=\mathbf{1}\{\text{iteration $k$ is successful}\}$. Since $N_{succ,k}\geq M-1$, it follows from Lemma~\ref{lemma2.8} and then Lemma~\ref{lemma2.7} that $A_{k}=1$ implies
\begin{align*}
\Psi_{k}-\Psi_{k+1}&=\psi_{k}-\psi_{k+1}+\eta\Delta_{k}^2(1-\gamma^2)\\
&\geq c\min_{0\leq i\leq M-1}\left\{\left(\Delta_{succ,k}^{i}\right)^2\right\}+\eta\Delta_{k}^2(1-\gamma^2)\\
&\geq\Delta_{k}^2\left(\frac{c}{\gamma^{2M-2}}+\eta(1-\gamma^2)\right)\\
&=\nu\Delta_{k}^2.
\end{align*}
Also note that $A_{k}=0$ implies that
\begin{align*}
\Psi_{k}-\Psi_{k+1}&=\psi_{k}-\psi_{k+1}+\eta\Delta_{k}^2(1-\theta^2)\\
&=\eta\Delta_{k}^2(1-\theta^2)\\
&=\nu\Delta_{k}^2.
\end{align*}
Therefore, we conclude that
\begin{align*}
\Psi_k-\Psi_{k+1}&=(\Psi_k-\Psi_{k+1})A_{k}+(\Psi_k-\Psi_{k+1})(1-A_{k})\\
&\geq\nu\Delta_k^2A_{k}+\nu\Delta_k^2(1-A_{k})\\
&=\nu\Delta_k^2.
\end{align*}
\end{proof}
Finally, from this descent lemma, one can apply similar stopping time arguments as in and Theorem~\ref{t3.9} to obtain a bound on $E(T_\epsilon)$. 

\section{Sampling complexity}\label{app:sampling}

In this appendix, we demonstrate how the tail-bound error in Assumption~\ref{a23} can be satisfied for a standard setting where the stochastic function oracle is estimated from a number of samples. 
In more detail, we suppose that at iteration $k$, the oracle $\widehat f_k(x)$ estimates the function value at point $x$ using $N_k$ independent samples as follows
\begin{align*}
\widehat f_k(x)=\frac1{N_k}\sum_{j=1}^{N_k}F(x,\zeta_{k,j}),
\end{align*}
where $E[F(x,\zeta)]=f(x)$ and $F(x,\zeta)-f(x)$ is sub-Gaussian with variance proxy $\sigma^2$. This estimation is independent of the filtration $F_{k+1/2}$. 
\begin{lemma}[A sufficient batch size]\label{prop:batch}
Let us compute the stochastic oracles in Algorithm~\ref{algo2} as described. 
The second condition of Assumption~\ref{a23} holds if
\begin{align}\label{sample-size-tail}
N_k\geq\frac{\sigma^2\left(M+1\right)^2}{\beta^2\Delta_k^4}.
\end{align}
Thus $N_k=\mathcal O(\Delta_k^{-4})$ is sufficient to guarantee the satisfaction of this condition.
\end{lemma}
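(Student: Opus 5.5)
The plan is to write $\varepsilon_k=G_k-H_k$ explicitly as a function of the $M+1$ individual estimation errors. Then we control its upper tail through a crude but robust bound.

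Set $e^i_k=\widehat f_k(X^i_{succ,k})-f(X^i_{succ,k})$ for $1\le i\le M-1$, $e^0_k=\widehat f_k(X_k)-f(X_k)$, and $e^d_k=\widehat f_k(X^d_k)-f(X^d_k)$. We use the elementary fact $\max_i(a_i+e_i)-\max_i a_i\le \max_i e_i\le \sum_i |e_i|$. This gives the pointwise bound
\begin{align*}
\varepsilon_k\le \max\left(e^{M-1}_k,\ldots,e^1_k,e^0_k\right)-e^d_k\le \sum_{i=0}^{M-1}|e^i_k|+|e^d_k|,
\end{align*}
which is a sum of $M+1$ absolute errors. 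Conditionally on $\mathcal F_{k+1/2}$, all query points are fixed, and the samples are fresh and independent of $\mathcal F_{k+1/2}$. Hence each $e_k$ is an average of $N_k$ i.i.d. mean-zero sub-Gaussian variables with proxy $\sigma^2$, so it is sub-Gaussian with proxy $\sigma^2/N_k$. In particular $E[(e_k)^2\mid\mathcal F_{k+1/2}]\le \sigma^2/N_k$, and so $E[|e_k|\mid\mathcal F_{k+1/2}]\le \sigma/\sqrt{N_k}$.

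Next, Markov's inequality applied to the nonnegative variable $\sum|e|$ gives, for every $b>0$,
\begin{align*}
P(\varepsilon_k\ge b\mid\mathcal F_{k+1/2})\le \frac{E\left[\sum_{i=0}^{M-1}|e^i_k|+|e^d_k|\,\middle|\,\mathcal F_{k+1/2}\right]}{b}\le \frac{(M+1)\sigma}{\sqrt{N_k}\,b}.
\end{align*}
We then require $(M+1)\sigma/\sqrt{N_k}\le \beta\Delta_k^2$, which is exactly $N_k\ge \sigma^2(M+1)^2/(\beta^2\Delta_k^4)$, i.e.\ condition \eqref{sample-size-tail}. Since $\Delta_k$ is $\mathcal F_k$-measurable, choosing $N_k$ this way is admissible. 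The bound $\beta\Delta_k^2/b$ then holds for every $b>0$, and the $\mathcal O(\Delta_k^{-4})$ claim follows immediately.

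The main obstacle is the measurability and independence bookkeeping. We must justify that conditioning on $\mathcal F_{k+1/2}$ freezes the points $X^i_{succ,k},X_k,X^d_k$ while leaving the new samples with their unconditional distribution, so that the sub-Gaussian moment bound holds conditionally. A secondary subtlety is the handling of repeated points, for instance $X^i_{succ,k}=X_0$ for several $i$. The algorithm draws independent $\xi$ for each query, so each error is still an independent average. In any case the triangle-inequality bound does not require independence across the $M+1$ errors, only the individual moment bounds. This is why I would use first-moment Markov rather than a sharper concentration argument.
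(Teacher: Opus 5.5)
Your proof is correct and follows essentially the same route as the paper: bound $\varepsilon_k$ by $\sum_i|e_i|+|e_d|$, use the sub-Gaussian moment bound $E[|e|\mid\mathcal F_{k+1/2}]\le\sigma/\sqrt{N_k}$ for each of the $M+1$ errors, and apply Markov's inequality with $(M+1)\sigma/\sqrt{N_k}\le\beta\Delta_k^2$. The only difference is cosmetic. You bound $\varepsilon_k$ one-sidedly and apply Markov to the nonnegative sum directly, whereas the paper bounds $|\varepsilon_k|$ and then uses $\{\varepsilon_k\ge b\}\subseteq\{|\varepsilon_k|\ge b\}$.
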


\begin{proof}
Let us denote $a_0 = F(X_{k}), a_i = F(X_{succ,k}^{i}) $ for $ i \in \{1, \dots, M-1\},  a_d = F(X^{d}_{k})$, 
and $e_0, e_i$'s$, e_d$ be the corresponding errors when estimating these quantities. 
Hence 
\begin{align*}
    \varepsilon_{k}&=G_{k}-H_{k} = [\max_i(a_i + e_i) - (a_d + e_d)] - [\max_i a_i - a_d] = \max_i(a_i +e_i) - \max_i (a_i) - e_d. 
\end{align*}
And we have 
\begin{align*}
    |\varepsilon_{k}|&\leq  |\max_i(a_i +e_i) - \max_i (a_i)| + |e_d| \leq  \max_i|e_i| + |e_d| < \sum_i|e_i| + |e_d|. 
\end{align*}

As the $e_0, e_i$'s$, e_d$ are estimated with $N_k$ independent samples and each sample has Gaussian error with variance proxy $\sigma^2$, the errors $e_0, e_i$'s$, e_d$ are sub-Gaussian with variance proxy $\sigma^2/N_k$. This implies the bound 
\begin{align}\label{eq:expected-trial-error}
E\left[|e_i|\mid\mathcal F_{k+1/2}\right]
\leq \sqrt{E[e_i^2\mid\mathcal F_{k+1/2}]}
\leq \frac{\sigma}{\sqrt{N_k}}.
\end{align}
As a result 
\begin{align*}
E[|\varepsilon_k|\mid\mathcal F_{k+1/2}]
&\leq \sum_i E[|e_i|\mid\mathcal F_{k+1/2}] + E[|e_d|\mid\mathcal F_{k+1/2}]
\leq\frac{(M+1)\sigma}{\sqrt{N_k}} \leq \beta\Delta_k^2,
\end{align*}
where the last inequality follows from the sample size condition. Markov's inequality gives, for every $b>0$,
\begin{align*}
P\left(|\varepsilon_k|\geq b\mid\mathcal F_{k+1/2}\right)
&\leq \frac{E[|\varepsilon_k|\mid\mathcal F_{k+1/2}]}{b}\leq \frac{\beta\Delta_k^2}{b}.
\end{align*}
Since $\{\varepsilon_k\geq b\}\subseteq\{|\varepsilon_k|\geq b\}$, this proves the second condition of Assumption~\ref{a23}. 
\end{proof}

\section{Problem Set Details}\label{sec:app_exp}
In the experiment, we use two problem sets from the CUTEst collection~\cite{NGould_DOrban_PToint_2015}.
We describe the problem names and their corresponding dimensions below. 

\begin{table}[H]
\begin{tabular}{|l|c|l|c|l|c|}
\hline
Problems & {Dimensions} & Problems  & {Dimensions} & Problems & {Dimensions} \\
\hline
ARGLINA  & 10                             & ARGTRIGLS & 10                             & ARWHEAD  & 100                            \\
BDEXP    & 100                            & BOXPOWER  & 10                             & BROWNAL  & 10                             \\
COSINE   & 10                             & CURLY10   & 100                            & DIXON3DQ & 10                             \\
DQRTIC   & 10                             & ENGVAL1   & 100                            & EXTROSNB & 10                             \\
FLETBV3M & 10                             & FLETCBV3  & 10                             & FLETCHBV & 10                             \\
FLETCHCR & 10                             & FREUROTH  & 10                             & INDEFM   & 10                             \\
MANCINO  & 10                             & MOREBV    & 10                             & NONCVXU2 & 10                             \\
NONCVXUN & 10                             & NONDIA    & 10                             & NONDQUAR & 100                            \\
PENALTY2 & 10                             & POWER     & 10                             & QING     & 5                              \\
QUARTC   & 100                            & SENSORS   & 10                             & SINQUAD  & 100                            \\
SCOSINE  & 10                             & SCURLY10  & 10                             & SPARSINE & 10                             \\
SPARSQUR & 10                             & SSBRYBND  & 10                             & TRIDIA   & 10                             \\
TRIGON1  & 10                             & TOINTGSS  & 10                             &          & \\
\hline
\end{tabular}
\caption{Problem set 1.}
\end{table}

\begin{table}[H]
\begin{tabular}{|l|c|l|c|l|c|}
\hline
Problems & {Dimensions} & Problems & {Dimensions} & Problems & {Dimensions} \\
\hline
ALLINITU & 4                              & BARD     & 3                              & BIGGS6   & 6                              \\
BOX3     & 3                              & BROYDN7D & 10                             & BRYBND   & 10                             \\
CUBE     & 2                              & DENSCHND & 3                              & DENSCHNE & 3                              \\
DIXMAANA & 15                             & DIXMAANB & 15                             & DIXMAANC & 15                             \\
DIXMAAND & 15                             & DIXMAANE & 15                             & DIXMAANF & 15                             \\
DIXMAANG & 15                             & DIXMAANH & 15                             & DIXMAANI & 15                             \\
DIXMAANJ & 15                             & DIXMAANK & 15                             & DIXMAANL & 15                             \\
ENGVAL2  & 3                              & ERRINROS & 25                             & EXPFIT   & 2                              \\
FMINSURF & 16                             & GROWTHLS & 3                              & GULF     & 3                              \\
HAIRY    & 2                              & HATFLDD  & 3                              & HATFLDE  & 3                              \\
HEART6LS & 6                              & HEART8LS & 8                              & HELIX    & 3                              \\
HIELOW   & 3                              & HIMMELBB & 2                              & HIMMELBG & 2                              \\
HUMPS    & 2                              & KOWOSB   & 4                              & LOGHAIRY & 2                              \\
MARATOSB & 2                              & MEYER3   & 3                              & MSQRTALS & 4                              \\
MSQRTBLS & 9                              & OSBORNEA & 5                              & OSBORNEB & 11                             \\
PENALTY3 & 50                             & SNAIL    & 2                              & SPMSRTLS & 28                             \\
STRATEC  & 10                             & VIBRBEAM & 8                              & WATSON   & 12                             \\
WOODS    & 4                              & YFITU    & 3                              &          & \\ 
\hline
\end{tabular}
\caption{Problem set 2.}
\end{table}

\end{appendices}

\end{document}